\newcounter{example}
\newenvironment{example}[1]{\refstepcounter{example}\textbf{Example \theexample :} #1}{\medskip}
\newcommand{\BA}{\mathbb{A}}
\newcommand{\BB}{\mathbb{B}}
\newcommand{\BD}{\mathbb{D}}
\newcommand{\BE}{\mathbb{E}}
\newcommand{\BF}{\mathbb{F}}
\newcommand{\BI}{\mathbb{I}}
\newcommand{\BK}{\mathbb{K}}
\newcommand{\BP}{\mathbb{P}}
\newcommand{\BR}{\mathbb{R}}
\newcommand{\BS}{\mathbb{S}}
\newcommand{\BZ}{\mathbb{Z}}
\newcommand{\cc}{\mathfrak{c}}
\newcommand{\cT}{\mathcal{T}}
\newcommand{\supp}{\rm supp}
\newcommand{\Sep}{{\rm Sep}}
\newcommand{\bbR}{\mathbb{R}}
\newcommand{\bbZ}{\mathbb{Z}}
\renewcommand{\L}{\mathcal{L}}
\newcommand{\md}[1]{\mathrm{\mu}_{d,1}\left(#1\right)}
\newcommand{\cyn}[1]{\mathfrak{c}\left(#1\right)}
\newcommand{\cC}{\mathcal{C}}
\newcommand{\di}[2]{\mathop{d}\left(#1,#2\right)}
\newcommand{\abs}[1]{\left| #1 \right|}
\newcommand{\p}[1]{\mathbb{P}\left( #1 \right)}
\newcommand{\pc}[2]{\mathbb{P}\left( #1 | #2 \right)}
\newcommand{\expo}[1]{\mathop{\exp}\left(#1\right)}
\newcommand{\E}[1]{\mathbb{E}\left(#1\right)}
\newtheorem{theo}{Theorem}[section]
\newtheorem{defi}[theo]{Definition}
\newtheorem{lemma}[theo]{Lemma}
\newtheorem{prop}[theo]{Proposition}
\newcounter{assumptions}
\title{Random cover times using the Poisson cylinder process}
\author{Erik I. Broman, \footnote{Chalmers Univeristy of Technology
and Gothenburg University, email: broman@chalmers.se } 
Filipe Mussini \footnote{Uppsala University, email: filipe.mussini@math.uu.se}}
\begin{document}

\maketitle

\begin{abstract}
In this paper we deal with the classical problem of random cover times. 
We investigate the distribution of the time it takes
for a Poisson process of cylinders to cover a set $A \subset \BR^d.$
This Poisson process of cylinders is invariant under rotations, reflections 
and translations, and in addition we add a time component so that 
cylinders are ''raining from the sky'' at unit rate.
Our main results concerns the asymptotic of this cover time as the set 
$A$ grows. If the set $A$ is discrete and well separated, we show
convergence of the cover time to a Gumbel distribution. If instead 
$A$ has positive box dimension (and satisfies a weak additional assumption), 
we find the correct rate of convergence. 
\end{abstract}

\section{Introduction}\label{sec:Introduction}
There are many variants of coverage problems that has been studied in the 
probabilistic literature. One of the first papers on this subject was 
by Dvoretzky (\cite{Dvor}) and dealt with the problem of covering the circle 
by using a sequence of sets placed randomly around the circle. The related
problem of covering $\BR^d$ was then later studied by Shepp in (\cite{Shepp})
for $d=1$,  Bierm\'{e} and Estrade (\cite{BE}) for general $d$ and also
by Broman, Jonasson and Tykesson (\cite{BJT}) for general $d.$  
In common to all of these papers is that there were no time-component involved. 
Instead, they all consider (infinite) measures $\mu$ on the set of compact subsets
of $\BR^d$. Then, they study a Poisson process using $\mu$ as the intensity 
measure, and ask whether $\BR^d$ will be completely covered. Of course, this 
will depend on the particular choice of $\mu$ and this dependence is what they
investigate.

A variant of this covering problem is to do the following. Start with some
bounded set $A \subset \BR^d$, and throw down other (possibly random) sets 
$B_i$ in random locations, and proceed until $A$ is covered. 
It is then natural to ask about the distribution of the number of sets
needed in order to cover $A$. Alternatively, if the sets are dropped at 
unit rate, one can ask about the distribution of the cover time, i.e. the 
time it takes until $A$ is covered.
An example would 
be to let $A$ be the unit square in $\BR^2,$ and letting $B_i$ all be squares
of side length $\epsilon$ with their centres uniformly distributed in $A.$ 
Another example would be to let the side length of the sets $B_i$ be random.
Such problems was studied by for instance Siegel and Holst (\cite{SH}) 
and Janson \cite{Jan1} on the circle, while a much more general 
result was later obtained by Janson in \cite{Jan2}. In particular,
all of these papers studied asymptotics of the cover time as the set
$A$ increases.

More recently, Belius \cite{Belius} studied the problem of covering a bounded set 
$A \subset \BZ^d$ by what is known as random interlacements. This is basically
a Poisson process on the trajectories of bi-infinite random walks in $\BZ^d$
and was introduced by Sznitman in \cite{Sznit}. The major difference between
the paper by Belius and the others mentioned above, is that the interlacement 
trajectories are unbounded objects, whereas in the classical setting, the 
corresponding sets are finite. The use of infinite objects introduces a number 
of new challenges as the cover times of separated sets are no longer independent. 

\medskip

The aim of this paper is to combine the classical problem of covering sets
$A \subset \BR^d$ of non-zero dimension (rather than say a subset of $\BZ^d$), 
with the use of unbounded objects to cover the set.  
In order to explain our main result, we shall here give informal 
descriptions of the
mathematical quantities and tools needed. Precise definitions and explanations
will be given in Section \ref{sec:models}. For the models to make sense, 
we will always assume that the dimension $d$ is at least $2$, and this will be 
assumed throughout the paper without any further comment.

We will use a Poisson 
process $\Psi$ where an element $(L,s)\in \Psi$ consists of a
line $L \subset \BR^d$ and a ''time-stamp'' $s \in \BR^+.$ 
The set of lines with time-stamp
smaller than $t$ will then be a Poisson process on the set of 
lines in $\BR^d$, and this process will be invariant under rotations, 
reflections and translations (see Section \ref{sec:models}). 
For such a line $L$ we consider the corresponding
cylinder $\cc(L)$ with base radius $1,$ i.e.
\[
\cc(L) := \{ x \in  \bbR^d : \di{x}{L} \leq 1 \}. 
\]
Then we ask whether
\[
A \subset \bigcup_{(L,s)\in \Psi:s \leq t} \cc(L)?
\] 
That is, we ask whether $A$ has been covered by the cylinders $\cc(L)$
which have dropped before time $t,$ and we let the {\em cover time} of 
$A$ be 
\[
\cT(A):=\inf\left\{t > 0:A \subset \bigcup_{(L,s)\in \Psi:s \leq t} \cc(L)\right\}.
\]

In order to state our main result, we will need to define the box dimension of 
a bounded set $A \subset \BR^d.$ A further albeit brief discussion of 
this concept is also included in Section \ref{sec:dim}. 
Let $N_\delta(A)$ be the minimum number of boxes of side length $\delta>0$
needed to cover $A.$ The box dimension of $A$ is then defined by 
\[
\dim_B(A)=\lim_{\delta \to 0} \frac{N_\delta(A)}{-\log \delta},
\]
whenever this limit exists. 

We can now state our main theorem.

\begin{theo} \label{thm:main}
For any set $A \subset \BR^d$ such that $\dim_B(A)$ is well defined, 
and which satisfies the additional condition 
\begin{equation} \label{eqn:dimliminfsup}
0<\liminf_{\delta \to 0} \delta^{\dim_B(A)} N_\delta(A)
\leq \limsup_{\delta \to 0} \delta^{\dim_B(A)} N_\delta(A)<\infty,
\end{equation}
we have that the sequence 
\[
(\cT(nA)-\dim_B(A)(\log n+\log \log n))_{n \geq 1}
\]
is tight.
\end{theo}

\noindent
{\bf Remarks:} Of course, $nA$ is simply the set $A$ scaled 
by a factor of $n.$

It is reasonable (in light of similar results such 
as the ones in \cite{Jan2}), to expect that if $A$ satisfies a strong
enough regularity condition, 
\[
\cT(nA)-\dim_B(A)(\log n+\log \log n) +C
\]
converges to a Gumbel distribution for a suitable choice of constant $C.$
Our main result does not quite achieve this.
However, it does show that if 
$\cT(nA)-f(n)$ converges to a non-trivial random variable, then the
function $f(n)$ must in fact take the form 
$\dim_B(A)(\log n+\log \log n) +O(1)$. 
See also the remark after the proof of Theorem \ref{thm:main}.

It might not be clear whether a ''typical'' set should satisfy condition
\eqref{eqn:dimliminfsup}.
While we will not discuss this question in full detail (as it is more 
a statement that belongs to fractal geometry), we give a basic result 
(Proposition \ref{prop:Arhoex})
and provide a few examples in Section \ref{sec:applications}. That section
should be sufficient motivation that assumption \eqref{eqn:dimliminfsup} 
is satisfied for a rich family of sets.

In Theorem \ref{thm:aux}, we will provide an auxiliary (and somewhat weaker) 
result that covers the case when \eqref{eqn:dimliminfsup} fails.

\bigskip

Our second main result deals with sequences of finite subsets of $\BR^d.$
As this text deals with both finite subsets of $\BR^d$ and non-discrete 
subsets, it will be convenient to use different notations for the two. 
Therefore we let $\BA,\BD$ etc denote finite subsets of 
$\BR^d,$ while $A\subset \BR^d$ will be used to denote a non-discrete 
set. We will use the following definition
\[
\Sep(\BD):=\inf\{d(x,y): x,y\in \BD\},
\]
so that $\Sep(\BD)$ denotes the minimum separation distance between any 
two points in the set $\BD.$ A finite set $\BD$ with $\Sep(\BD) \geq \rho$
will be called $\rho$-separated.

Here and in the rest of the paper, $|\cdot|$ denotes cardinality. 

\begin{theo} \label{thm:maindiscrete}
Let $(\BD_n)_{n\geq 1}$ be a sequence of finite subsets of $\BR^d$ such 
that $\lim_{n \to \infty} |\BD_n|=\infty$ and such that 
\[
\liminf_{n} \Sep(\BD_n) \log |\BD_n|=\infty.
\]
Then, $(\cT(\BD_n) -\log\abs{\BD_n})_{n \geq 1}$ converges in 
distribution to the 
Gumbel distribution as $n \to \infty$.
\end{theo}
\noindent
{\bf Remark:}
In Section \ref{sec:fluct}, we will prove Theorem \ref{thm:fluct}, 
which provides a bound for the fluctuation of 
$\BP((\cT(\BD)-\log |\BD|\leq z)$ from the Gumbel distribution function.
This theorem will then readily imply Theorem \ref{thm:maindiscrete}. 

The same fluctuation result will also be used later to prove 
Theorem \ref{thm:main}. In order to give an informal explanation of 
how this is done, 
consider Theorem \ref{thm:main} and some set $A$. 
We will consider a carefully chosen
finite set sitting inside of $nA$, and the cover time of this
set will provide a lower bound to $\cT(nA).$ In order to
find an upper bound on $\cT(nA),$ we will consider the same discrete set, 
but require that each point is singularly covered (meaning that a small ball 
around the point is covered by a single cylinder of the process). 
If all points are singularly covered, this will imply that the set $nA$
is covered, and this allows us to obtain an upper bound to the 
cover time of $nA.$

\bigskip

Before we wrap up this section, 
we want to provide a short intuitive explanation of the overall strategy 
of proving our main results. Informally, if the cover time is $\tau,$ 
then immediately before this time (i.e. at time $\tau(1-\epsilon)$), 
the set which is uncovered will with high probability consist
of small islands which are well separated. Indeed, this is the case 
as Proposition \ref{prop:Gbound} shows. In addition, 
Proposition \ref{prop:Gbound} tells us that the number of 
such islands will be highly concentrated around its expected value.
This can then be combined with Proposition \ref{prop:almostindependentcover} 
which tells us that the cover times of these well separated islands are 
almost independent. These two results are then combined into 
Theorem \ref{thm:fluct}, which provides the final estimates of the 
fluctuations of the cover time $\cT(\BD).$ This result is then in turn 
used to prove both Theorem \ref{thm:main} and Theorem \ref{thm:maindiscrete}.

\bigskip

The structure of the rest of the paper is as follows: In Section 
\ref{sec:models} we define the Poisson cylinder model and briefly 
discuss box dimensions. In Section \ref{sec:Cylprel} we present some 
preliminary results concerning the 
Poisson cylinder model, while in Section \ref{sec:packing} we prove some 
fundamental properties about $\rho$-separated sets. Then, 
Section \ref{sec:fluct}
is devoted to the proof of the fluctuation result, i.e. Theorem \ref{thm:fluct}
which is used to prove Theorem \ref{thm:maindiscrete} in the same section. 
The overall strategy in this section is similar to the strategy of \cite{Belius}
and described above.
However, there will also be many differences stemming from 
the fact that we are working in $\BR^d$ as opposed to the discrete space 
$\BZ^d$. Section \ref{sec:mainproof} is dedicated to proving 
Theorem \ref{thm:main} while
Section \ref{sec:secondaryproof} is used to prove our above mentioned secondary 
result, i.e. Theorem \ref{thm:aux}. Finally, Section \ref{sec:applications} 
contains some examples and Proposition \ref{prop:Arhoex} that pertains to 
assumption \eqref{eqn:dimliminfsup} of Theorem \ref{thm:main}.

\medskip

We end this section with a comment on notation. We shall frequently 
use $c$ to denote a constant (depending only on $d$) which may 
change from line to line. In contrast, numbered constants $c_k$ will 
be fixed.

\section{Model and definitions} \label{sec:models}
This section is divided into two subsections. The first one includes 
the definition and some preliminaries of the Poisson cylinder model. 
The second subsection will 
provide some basic background on box dimensions.

\subsection{The Poisson cylinder model} \label{subsec:PCmodel}
Our first step is to define the Poisson {\em line} model. To that end, 
let $G(d,1)$ be the set of infinite lines in $\bbR^d$ that pass 
through the origin $o$, and let $A(d,1)$ be the set of infinite lines in 
$\bbR^d$. Furthermore, let
$$\L_K := \{L \in A(d,1) : L \cap K \neq \emptyset \},$$
be the set of lines that intersect $K$ where $K \subset \bbR^d$ is a compact set.
For convenience, we let $\L_{A,B}$ denote the set $\L_A \cap \L_B$, i.e. 
the set of lines that intersect both sets $A,B \subset \bbR^d.$
Furthermore, $B(x,\rho)$ will denote the closed ($d$-dimensional) 
ball of radius $\rho$ centered at $x$.

We let $\nu_{d,1}$ be the unique Haar measure on the space $G(d,1)$ 
normalized so that 
$\nu_{d,1}(G(d,1))=1$. Furthermore, on $A(d,1)$ there 
is a unique (up to constants) 
measure which is invariant under rotations, reflections and translations. 
We will let $\mu_{d,1}$ denote this latter measure, normalized so that 
$\md{\L_{B_d(o,1)}} =1$ (see for instance \cite{SchneiderWeil} Chapter 13).
For any subspace
$H\subset \bbR^d,$ and set $D\subset \bbR^d,$ we let $\Pi_{H}(D)$
denote the projection of $D$ onto $H.$ Here, we will consider
$\Pi_{H}(D)$ to be a subset of $\bbR^d$ (and not just a subset of $H$).
Furthermore, we will let $\kappa_d$ denote the volume of the 
unit ball $B(o,1)$ in $\BR^d,$
and $\lambda_d$ denote Lebesgue measure on $\bbR^d$ so that
$\kappa_d=\lambda_d(B(o,1)).$

For any $L\in G(d,1)$ we will let $L^{\bot}$ be the $(d-1)$-dimensional 
hyperplane orthogonal to $L.$ The following representation 
(\cite{SchneiderWeil} Theorem $13.2.12$) of the measure $\mu_{d,1}$ will 
be useful for us. For any $K\subset \BR^d$ we have that 
\begin{equation} \label{eqn:repform1}
\md{\L_{K}}
=\frac{1}{\kappa_{d-1}}\int_{G(d,1)} 
\int_{L^\bot} \mathds{1}(L+y\in \L_{K}) \lambda_{d-1}(dy)\nu_{d,1}(dL),
\end{equation}
where $\mathds{1}$ denotes an indicator function.
Informally, for a fixed line $L,$ the inner integral integrates over 
all lines parallel to $L$ 
that intersects $K.$ Then, the outer integral integrates over all possible 
choices of $L.$

Our next step is to consider the following space of point measures on $A(d,1)$,
\[
\Omega = \{\omega = \sum_{i=1}^\infty \delta_{L_i}: L_i \in A(d,1) \mbox{ and }
 \omega(\L_A) < \infty \mbox{ for all compact } A \subset \bbR^d\},
\]
where $\delta_L$ denotes point measure at $L$. By standard abuse 
of notation, we will sometimes identify the random measure $\omega \in \Omega$ 
with its support $\supp(\omega)$, which is really a subset of $A(d,1).$

We define $\Psi$ to be a Poisson point process on $A(d,1)\times \bbR^+$ 
with intensity measure $\mu_{d,1}\times \lambda_1^+$ where
$\lambda_1^+$ denotes Lebesgue measure on $\bbR^+$.
We think of an element $(L,s)\in \Psi$ as a line in $\bbR^d$ accompanied 
with a time-stamp $s.$ We then let $\omega_t=\Pi_{A(d,1)} \{(L,s)
\in A(d,1)\times\bbR^+ : s\leq t\}$, where $\Pi_{A(d,1)}$ denotes projection
onto the space $A(d,1).$ Thus, $\omega_t$ is the collection of
lines which have been placed before or at time $t,$ and $\omega_t \in \Omega.$
Of course, by our definition,
$\omega_t$ is in fact a Poisson process on $A(d,1)$ with intensity measure 
$t \mu_{d,1}.$ Similarly, we let $ \omega_{t_1,t_2} 
=\Pi_{A(d,1)} \{(L,s) \in A(d,1)\times\bbR^+: t_1< s\leq t_2\}$, so that 
$\omega_{t_1,t_2}$ is the set of lines placed between times $t_1$ and $t_2.$
The intensity measure of $\omega_{t_1,t_2}$ is therefore 
$(t_2-t_1)\mu_{d,1}$. Obviously we have that 
$\omega_{t_1} \cup \omega_{t_1,t_2}=\omega_{t_2}$, and we will let 
$(\omega_t)_{t \geq 0}$ denote the corresponding process.

We will sometimes slightly abuse notation by 
writing $\cyn{L} \in \omega_t$ instead of $L\in \omega_t$, and we will often 
think of (and refer to)
$\omega_t$ as a collection of cylinders instead of lines.

\subsection{Box dimensions} \label{sec:dim}
In this subsection we shall review some basic properties of 
box dimensions, sometimes referred to as Minkowski dimensions
(see Falconer \cite{Falconer} Chapter 3).

Recall the definition of $N_\delta(A)$ from the introduction. 
Then, define 
\[
\overline{\dim}_B(A)
:=\limsup_{\delta \to 0} \frac{\log N_\delta(A)}{-\log \delta},
\]
called the {\em upper box dimension} of the set $A$. 
Similarly, we define 
\[
\underline{\dim}_B(A)
:=\liminf_{\delta \to 0} \frac{\log N_\delta(A)}{-\log \delta},
\]
to be the {\em lower box dimension} of the set $A,$
and if these coincides, then 
\[
\dim_B(A)=\overline{\dim}_B(A)=\underline{\dim}_B(A)
\]
is simply called the box dimension of $A.$

The two quantities in \eqref{eqn:dimliminfsup} are 
related to the upper and lower Minkowski content (see
\cite{Federer} Sections 3.2.37-3.2.44). It is beyond the scope of this 
paper to investigate this relationship in detail. However, as mentioned
already in the remarks after the statement of Theorem \ref{thm:main},
it will be the case that \eqref{eqn:dimliminfsup} is satisfied for many 
sets. See in particular Proposition \ref{prop:Arhoex}.

We can now present Theorem \ref{thm:aux} mentioned in the remarks
after Theorem \ref{thm:main}.
\begin{theo} \label{thm:aux}
For any $\underline{\alpha}<\underline{\dim}_B(A)$, 
we have that for every $z\in \BR$, 
\[
\lim_{n \to \infty}\BP(\cT(nA)-\underline{\alpha}\log n \leq z)=0.
\]
Furthermore, for any $\overline{\alpha}>\overline{\dim}_B(A)$, we have that 
for every $z\in \BR$, 
\[
\lim_{n \to \infty}\BP(\cT(nA)-\overline{\alpha}\log n \leq z)=1.
\]
\end{theo}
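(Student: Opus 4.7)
The strategy is to bracket $\mathcal{T}(nA)$ between the two cover-time variants already analyzed, exploiting the pathwise coupling $T_d^{\rho_0}(nA) \leq \mathcal{T}(nA) \leq T_w^{\rho_0}(nA)$ from \eqref{eqn:TdTTwcoupl} at a single small fixed parameter. Fix once and for all some $\rho_0 \in \left(0,\, \min\!\left(\tfrac{d-1}{6d},\, \tfrac{C_d}{5}\right)\right)$ so that Theorem \ref{theo:discretecover} is applicable at scale $\rho_0$.

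\textbf{Lower bound.} If $\underline{\alpha} \leq 0$ the statement is immediate since $\mathcal{T}(nA) \geq 0$ and $\underline{\alpha}\log n + z \to -\infty$. So assume $0 < \underline{\alpha} < \underline{\dim}_B(A)$ and pick $\underline{\alpha}'$ strictly between them. The characterization $\underline{\dim}_B(A) = \liminf_{\delta \to 0}\log|A^\delta|/(-\log\delta)$, combined with the scaling identity $|(nA)^{\rho_0}| = |A^{\rho_0/n}|$ from \eqref{eqn:Arhoscaling}, yields
\[
\log|(nA)^{\rho_0}| \;\geq\; \underline{\alpha}'\log(n/\rho_0)
\]
for all $n$ large. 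In particular $|(nA)^{\rho_0}| \to \infty$, so Theorem \ref{theo:discretecover} applied to $A_n := nA$ gives that $T_d^{\rho_0}(nA) - \log|(nA)^{\rho_0}|$ converges in distribution to a Gumbel random variable. Setting $s_n := \underline{\alpha}\log n + z - \log|(nA)^{\rho_0}|$, the bound above gives $s_n \leq z - (\underline{\alpha}'-\underline{\alpha})\log n + \underline{\alpha}'\log\rho_0 \to -\infty$. Since $\mathcal{T}(nA) \geq T_d^{\rho_0}(nA)$,
\[
\mathbb{P}\bigl(\mathcal{T}(nA) - \underline{\alpha}\log n \leq z\bigr) \;\leq\; \mathbb{P}\bigl(T_d^{\rho_0}(nA) - \log|(nA)^{\rho_0}| \leq s_n\bigr),
\]
which by Gumbel convergence together with $s_n \to -\infty$ tends to $0$.

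\textbf{Upper bound.} Here a bare union bound on singular-cover times is enough, with no need to invoke Theorem \ref{theo:whatimplieswellcovered}. The event $B(x,\rho) \subset \mathfrak{c}(L)$ is precisely $\{L \in \mathcal{L}_{B(x, 1-\rho)}\}$, and by translation invariance together with the scaling $\mu_{d,1}(\mathcal{L}_{B(o,r)}) = r^{d-1}$, lines satisfying it arrive at $x$ at rate $c(\rho) := (1-\rho)^{d-1}$, so $T_s^\rho(x)$ is exponential with this rate. Union-bounding over $(nA)^\rho$,
\[
\mathbb{P}\bigl(T_w^\rho(nA) > t\bigr) \;\leq\; |(nA)^\rho|\, e^{-c(\rho)\, t}.
\]
Given $\overline{\alpha} > \overline{\dim}_B(A)$, pick $\overline{\alpha}''$ strictly between them and then choose $\rho \in (0, \rho_0)$ small enough that $(1-\rho)^{d-1}\overline{\alpha} > \overline{\alpha}''$; this is possible because $\overline{\alpha}'' < \overline{\alpha}$. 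The upper box dimension gives $\log|(nA)^\rho| = \log|A^{\rho/n}| \leq \overline{\alpha}''\log(n/\rho)$ for $n$ large. Plugging $t = \overline{\alpha}\log n + z$, the logarithm of the right-hand side of the union bound is at most
\[
\bigl(\overline{\alpha}'' - (1-\rho)^{d-1}\overline{\alpha}\bigr)\log n \;-\; \overline{\alpha}''\log\rho \;-\; (1-\rho)^{d-1}z,
\]
whose leading $\log n$-coefficient is negative, so the expression tends to $-\infty$. Combined with $\mathcal{T}(nA) \leq T_w^\rho(nA)$, this shows $\mathbb{P}(\mathcal{T}(nA) \leq \overline{\alpha}\log n + z) \to 1$.

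\textbf{Main obstacle.} The only delicate point is juggling the two scales at which the problem lives. Theorem \ref{theo:discretecover} is applied at a fixed discrete scale $\rho_0$ inside the blown-up set $nA$, while the upper and lower box dimensions naturally describe the shrinking scale $\rho_0/n$ inside the original copy $A$; the scaling identity \eqref{eqn:Arhoscaling} is what bridges them. The small linear-in-$\log n$ gap $(\underline{\alpha}'-\underline{\alpha})$ (respectively $(1-\rho)^{d-1}\overline{\alpha} - \overline{\alpha}''$) is what provides the logarithmic slack needed to absorb both the constant $z$ and the $O(1)$ error $\mp\log\rho_0$ introduced by the scaling, so that no quantitative rate information is lost in the passage.
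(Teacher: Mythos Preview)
Your proof is correct and takes a genuinely more elementary route than the paper's. The paper lets $\rho_n = \log\log n / \log|(nA)^1|$ vary with $n$ and invokes the quantitative error bound of Theorem~\ref{theo:whatimpliesdiscretecovered} (and, for the second half, implicitly Theorem~\ref{theo:whatimplieswellcovered}), arranging that $|A_n^{\rho_n}|^{-\rho_n/600} \leq e^{-\log\log n/600} \to 0$; this requires re-checking Assumptions~\ref{condition:rho}--\ref{condition:rholog} for the moving $\rho_n$. You instead freeze a single $\rho_0$, use only the qualitative Gumbel limit of Theorem~\ref{theo:discretecover} for the lower bound, and replace the well-cover machinery entirely by a one-line union bound $\mathbb{P}(T_w^\rho(nA)>t)\le |(nA)^\rho|e^{-\gamma(\rho)t}$ for the upper bound, with $\rho$ chosen small enough that $(1-\rho)^{d-1}\overline\alpha>\overline\alpha''$. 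Your argument is shorter and avoids the assumption-checking; the paper's approach is more uniform across the two halves and would more readily yield explicit rates.

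One small slip: in your trivial case you claim $\underline{\alpha}\log n + z \to -\infty$ for $\underline{\alpha} \leq 0$, but at $\underline{\alpha} = 0$ this is the constant $z$. This is harmless: when $\underline{\alpha} = 0$ the hypothesis forces $\underline{\dim}_B(A) > 0$, and your main argument applies verbatim by choosing $\underline{\alpha}' \in (0, \underline{\dim}_B(A))$. Just move the case boundary to $\underline{\alpha} < 0$.
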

\noindent
{\bf Remark:} One can consider the cover times along a subsequence 
$(n_k )_{k \geq 1}$ such that 
\[
\lim_{k \to \infty}\frac{N_{1/n_k}(A)}{-\log n_k}=\overline{\dim}_B(A).
\]
If in addition, 
\[
0<\liminf_{k \to \infty} n_k^{-\overline{\dim}_B(A)}N_{1/n_k}(A)
\leq \limsup_{k \to \infty} n_k^{-\overline{\dim}_B(A)}N_{1/n_k}(A)<\infty,
\]
then, one will obtain a result resembling Theorem \ref{thm:main} but along 
this subsequence, and with $\overline{\dim}_B(A)$ in place of $\dim_B(A)$.
We also see that Theorem \ref{thm:aux} covers the cases when 
\[
\liminf_{\delta \to 0} \delta^{\dim_B(A)}N_{\delta}(A)=0
\textrm{ or }
\limsup_{\delta \to 0} \delta^{\dim_B(A)}N_{\delta}(A)=\infty.
\]
That is, for those sets $A \subset \BR^d$ with a well defined box dimension 
but where assumption \eqref{eqn:dimliminfsup} fails.

\section{Preliminary results concerning the Poisson cylinder model}\label{sec:Cylprel}

The next result allows us to estimate the measure of the set of cylinders 
intersecting two distant balls. This lemma was first published as
Lemma 3.1 of \cite{TykessonWindisch}. Here, we present a sketch 
for sake of completeness. 

\begin{lemma}\label{lem:measurebounds_org}
Let $x_1,x_2\in \BR^d$ and let $r=d(x_1,x_2).$
There exists constants $c_1$ and $c_2$ depending only on $d$ such that
\[
\frac{c_1}{r^{d-1}} \leq \md{\L_{B(x_1, 1),B(x_2, 1)}} \leq 
\frac{c_2}{r^{d-1}} ,
\]
for every $r\geq 4.$
\end{lemma}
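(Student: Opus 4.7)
The plan is to apply the integral representation \eqref{eqn:repform1} and reduce the question to a Haar-measure estimate on directions plus a volume estimate for the intersection of two unit balls in a hyperplane. Concretely, for any $L\in G(d,1)$ the projection $\Pi_{L^\bot}(B(x_i,1))$ is a closed unit ball in $L^\bot$ centered at $\Pi_{L^\bot}(x_i)$. Consequently, the set of translates $L+y$ (with $y\in L^\bot$) that intersect both $B(x_1,1)$ and $B(x_2,1)$ is exactly the intersection of these two $(d-1)$-balls, and its $(d-1)$-volume depends only on the distance between their centers, namely $r\sin\theta$, where $\theta\in[0,\pi/2]$ is the angle between $L$ and the segment $x_1x_2$. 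Thus
\[
\md{\L_{B(x_1,1),B(x_2,1)}}
=\frac{1}{\kappa_{d-1}}\int_{G(d,1)} V(r\sin\theta(L))\,\nu_{d,1}(dL),
\]
where $V(s)$ is the volume of the intersection of two unit balls in $\BR^{d-1}$ whose centers are at distance $s$.

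For the upper bound I would use the trivial estimate $V(s)\leq\kappa_{d-1}$, together with the fact that $V(s)=0$ when $s\geq 2$. This reduces the problem to bounding the $\nu_{d,1}$-measure of the set $\{L:\sin\theta(L)\leq 2/r\}$. Writing $\nu_{d,1}$ in terms of normalized surface measure on $S^{d-1}$ (identifying antipodes), this measure is proportional to $\int_0^{\arcsin(2/r)}\sin^{d-2}\phi\,d\phi$. Since $\sin\phi\leq\phi$, this integral is at most $c_d (2/r)^{d-1}$ for $r\geq 4$, giving the upper bound $c_2/r^{d-1}$.

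For the lower bound I would restrict the integration to the set $\{L:\sin\theta(L)\leq 1/r\}$. On this set the projected centers are at distance at most $1$, so the midpoint of the two centers lies at distance $\leq 1/2$ from each center, and hence the ball of radius $1/2$ around that midpoint is contained in both unit balls. This gives $V(r\sin\theta)\geq\kappa_{d-1}2^{-(d-1)}$ on the restricted set. The remaining step is to bound $\nu_{d,1}\{L:\sin\theta(L)\leq 1/r\}$ from below by $c_d/r^{d-1}$; using $\sin\phi\geq\phi/2$ on $[0,1]$ in the integral $\int_0^{\arcsin(1/r)}\sin^{d-2}\phi\,d\phi$ yields exactly such a bound when $r\geq 4$. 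Combining these two ingredients gives the lower bound $c_1/r^{d-1}$.

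The main obstacle, such as it is, is keeping track of the correct leading order in the spherical integral $\int_0^{\alpha}\sin^{d-2}\phi\,d\phi$ as $\alpha\downarrow 0$; the factor $\sin^{d-2}\phi$ forces the measure of a narrow cone around a fixed direction to scale like $\alpha^{d-1}$, which is precisely what produces the $r^{-(d-1)}$ rate. Everything else reduces to geometric estimates on projected balls, so I would present the argument in the order above: rewrite via \eqref{eqn:repform1}, identify the integrand as a function of $r\sin\theta$, bound $V$ above by $\kappa_{d-1}$ and below by $\kappa_{d-1}2^{-(d-1)}$ on an appropriate angular range, and finally estimate the relevant Haar measures on $G(d,1)$.
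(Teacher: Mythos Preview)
Your proposal is correct and takes a genuinely different route from the paper. The paper gives only a sketch (deferring to \cite{TykessonWindisch} for details): cover $\partial B(o,r)$ by order $r^{d-1}$ unit balls and use rotational symmetry to argue that, conditional on passing through $B(o,1)$, a random line hits any fixed ball of the cover with equal probability, hence with probability of order $r^{-(d-1)}$. Your approach instead plugs directly into the integral representation \eqref{eqn:repform1}, reducing the question to the lens volume $V(r\sin\theta)$ and a one-variable spherical integral $\int_0^\alpha \sin^{d-2}\phi\,d\phi$ governing the Haar measure of a narrow cone of directions. The advantage of your method is that it is fully self-contained and explicit: the constants can in principle be tracked, and no auxiliary argument is needed to control how many balls of the cover a single line can meet (which is the point one must address to make the symmetry sketch rigorous for the upper bound). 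The paper's sketch, on the other hand, is more geometric and instantly explains why the exponent must be $d-1$. Both arguments are short; yours is closer in spirit to the explicit computation carried out later in Proposition~\ref{prop:smallbeta}.
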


\noindent
{\bf Sketch of proof.}
By translation invariance of $\mu_{d,1}$, we can, without loss of generality 
assume that $x_1 = o$ so that $x_2$ is located on the surface of $B(o,r)$. 
Furthermore, we need order $r^{(d-1)}$ balls of radius 1 to cover the 
surface of $B(o,r)$. By symmetry, a random line passing through 
$B(o,1)$ will hit any fixed ball in the cover with equal probability. Thus, the
probability that it will hit $B(x_2,1)$ must be of order $r^{-(d-1)}$.
\fbox{}\\

\medskip

The next lemma states that, despite the long-range correlation nature of 
the Poisson cylinder process, events occurring in two distant sets are 
almost independent. The key point of the proof is to note that the events 
become independent after we conditioned on the event that no lines 
intersect both sets. 

\begin{lemma}\label{lem:almostindependecy}
 Let $K_1, K_2 \subset \bbR^d$ be disjoint sets and let $E_1$ and $E_2$ be events depending only on $\omega_t$ in $\L_{K_1}$ and $\L_{K_2}$ respectively. Then:
 $$\abs{\p{E_1 \cap E_2} - \p{E_1} \p{E_2}} \leq 4 \p{\omega_t(\L_{K_1,K_2}) \neq 0}.$$
\end{lemma}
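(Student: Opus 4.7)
The plan is to exploit the classical restriction/thinning property of Poisson point processes. Split the set of lines into three disjoint pieces
\[
\L_{K_1}\setminus \L_{K_1,K_2},\quad \L_{K_2}\setminus \L_{K_1,K_2},\quad \L_{K_1,K_2},
\]
and let $\omega_t^{(1)},\omega_t^{(2)},\omega_t^{(12)}$ denote the restrictions of the process $\omega_t$ to these three sets respectively. Since the three sets are pairwise disjoint, the standard restriction theorem for Poisson processes guarantees that $\omega_t^{(1)},\omega_t^{(2)},\omega_t^{(12)}$ are independent Poisson processes.

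Next, I would introduce ``decoupled'' versions of $E_1$ and $E_2$. Since $E_i$ is measurable with respect to the $\sigma$-algebra generated by $\omega_t\cap \L_{K_i}$, there is a measurable function $\Phi_i$ on the space of point measures such that $\mathds{1}_{E_i}=\Phi_i(\omega_t\cap \L_{K_i})$. Define
\[
\tilde E_i := \{\Phi_i(\omega_t^{(i)}) = 1\},\qquad i=1,2.
\]
Then $\tilde E_i$ depends only on $\omega_t^{(i)}$, so by step one $\tilde E_1$ and $\tilde E_2$ are independent. Moreover, on the event $A:=\{\omega_t(\L_{K_1,K_2})=0\}$ we have $\omega_t\cap \L_{K_i} = \omega_t^{(i)}$, so $E_i\cap A=\tilde E_i\cap A$ and likewise $E_1\cap E_2\cap A=\tilde E_1\cap\tilde E_2\cap A$.

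Set $p:=\p{A^c}=\p{\omega_t(\L_{K_1,K_2})\neq 0}$. Writing any event as its intersection with $A$ plus its intersection with $A^c$, we get
\[
|\p{E_i}-\p{\tilde E_i}|=\bigl|\p{E_i\cap A^c}-\p{\tilde E_i\cap A^c}\bigr|\leq p,
\]
and analogously $|\p{E_1\cap E_2}-\p{\tilde E_1\cap \tilde E_2}|\leq p$. Combining these with independence of $\tilde E_1,\tilde E_2$ and the simple telescoping estimate
\[
\bigl|\p{E_1}\p{E_2}-\p{\tilde E_1}\p{\tilde E_2}\bigr|\leq |\p{E_1}-\p{\tilde E_1}|+|\p{E_2}-\p{\tilde E_2}|\leq 2p,
\]
the triangle inequality gives $|\p{E_1\cap E_2}-\p{E_1}\p{E_2}|\leq 3p\leq 4p$, as required.

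The only mildly delicate point is the construction of $\tilde E_i$: one must be careful that ``depends only on $\omega_t$ in $\L_{K_i}$'' is interpreted as measurability with respect to the $\sigma$-algebra generated by $\omega_t\cap \L_{K_i}$, so that a representing functional $\Phi_i$ exists and can be applied to $\omega_t^{(i)}$. Once that is in place, everything else reduces to bookkeeping on the event $A$ versus $A^c$, and the independence provided by the restriction theorem.
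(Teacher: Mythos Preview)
Your proof is correct and follows essentially the same idea as the paper's: both exploit that, once no line of $\omega_t$ hits $\L_{K_1,K_2}$, the events $E_1$ and $E_2$ decouple. The paper phrases this as conditional independence given $\{\omega_t(\L_{K_1,K_2})=0\}$ and then does a direct calculation, whereas you make the decoupling explicit via the restriction theorem and the auxiliary events $\tilde E_i$; your bookkeeping even yields the slightly sharper constant $3$ in place of $4$.
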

\noindent
{\bf Proof.}
Note that
\begin{eqnarray}
 \lefteqn{\p{E_1 \cap E_2} } \label{eq:almostindependency1}\\
 & & =\pc{E_1}{\omega_t(\L_{K_1, K_2}) =0}\pc{E_2}{\omega_t(\L_{K_1, K_2}) =0}\p{\omega_t(\L_{K_1, K_2}) =0} \nonumber\\
 & & \ \ \ + \pc{E_1 \cap E_2}{\omega_t(\L_{K_1, K_2} \neq 0} \p{\omega_t(\L_{K_1, K_2}) \neq 0},\nonumber
\end{eqnarray}
since the events $E_1$ and $E_2$ are conditionally independent on $ \omega_t(\L_{K_1, K_2})=0$.
Furthermore, writing
\begin{eqnarray*}
\lefteqn{ \BP(E_i) 
=\pc{E_i}{\omega_t(\L_{K_1, K_2}) =0}\p{\omega_t(\L_{K_1, K_2}) =0}} \\
 && \ \ \ + \pc{E_i}{\omega_t(\L_{K_1, K_2}) \neq0}\p{\omega_t(\L_{K_1, K_2}) \neq0}
\end{eqnarray*}
 for $i= 1,2$ and using \eqref{eq:almostindependency1}, a straightforward calculation gives us that 
$$\abs{\p{E_1 \cap E_2} - \p{E_1} \p{E_2}} \leq 4 \p{\omega_t(\L_{K_1,K_2}) \neq 0},$$
as desired.
\fbox{}\\

As explained in the end of the introduction, one step in proving the 
main theorems is to show that the cover times of distant and 
small sets $K_1,\ldots,K_{n+1}$ are almost independent. If we 
consider one of these sets $K_j$, the next lemma gives us bounds 
on the probability that there exists 
a line passing through $K_j$ and any one of the other sets 
$K_i,$ $i \neq j.$

\begin{lemma}\label{lem:boundunion}
 Let $\{K_i\}_{i=1}^{n+1}$ be a family of sets such that 
 for every $i,$ $K_i \subset  B(x_i, 1)$ for some $x_i \in \bbR^d.$ 
Assume also that $\di{x_i}{x_j} = r_{ij} \geq 4 $ 
for every $i \neq j$ and let $r = \min_{i \neq j} r_{ij}$. 
 Then, for all $1 \leq j \leq n+1$ we have
 \[
 \p{\omega_t\left(\L_{\bigcup_{i=1} ^{n+1} K_i\setminus K_j, K_j}\right) \neq 0} 
 \leq  \frac{ntc_2}{r^{d-1}} ,
 \]
 where $c_2$ is the same as in Lemma \ref{lem:measurebounds_org}.
\end{lemma}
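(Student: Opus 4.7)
The plan is to reduce the statement to a union bound over pairs, apply Lemma \ref{lem:measurebounds_alt} to each pair, and then convert the measure bound into a probability bound via the standard Poisson inequality.

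First I would rewrite the set of lines in question as a union. Since the $K_i$ are pairwise disjoint (their containing balls $B(x_i,1-\rho)$ are, because $r_{ij}\geq 4(1-\rho)$), we have
\[
\L_{\bigcup_{i=1}^{n+1} K_i \setminus K_j,\, K_j}
= \bigcup_{i\neq j} \L_{K_i, K_j}.
\]
Since each $K_i \subset B(x_i, 1-\rho)$, monotonicity of $\mu_{d,1}$ gives $\L_{K_i,K_j} \subset \L_{B(x_i,1-\rho),\, B(x_j,1-\rho)}$, and a simple union bound yields
\[
\md{\L_{\bigcup_{i=1}^{n+1} K_i\setminus K_j,\, K_j}}
\leq \sum_{i\neq j} \md{\L_{B(x_i,1-\rho),\, B(x_j,1-\rho)}}.
\]

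Next I would apply Lemma \ref{lem:measurebounds_alt} to each term. The hypothesis $r_{ij}\geq 4(1-\rho)$ is exactly what is needed, so each summand is bounded by $c_2 \gamma(\rho)^2 / r_{ij}^{d-1} \leq c_2 \gamma(\rho)^2/r^{d-1}$ by the definition $r=\min_{i\neq j} r_{ij}$. There are $n$ indices $i\neq j$, so the total is at most $n c_2 \gamma(\rho)^2 / r^{d-1}$.

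Finally, I would convert to probability. Since $\omega_t$ is a Poisson process on $A(d,1)$ with intensity $t\mu_{d,1}$, the number $\omega_t(B)$ of lines falling in any measurable $B\subset A(d,1)$ is Poisson with mean $t\,\mu_{d,1}(B)$. Hence
\[
\p{\omega_t(B)\neq 0} = 1-e^{-t\mu_{d,1}(B)} \leq t\,\mu_{d,1}(B),
\]
and applying this with $B=\L_{\bigcup_i K_i\setminus K_j,\,K_j}$ together with the measure bound above gives exactly the claimed inequality. There is no real obstacle here; the only thing to be careful about is checking that Lemma \ref{lem:measurebounds_alt} applies (which is ensured by the separation assumption $r_{ij}\geq 4(1-\rho)$) and handling the union/monotonicity cleanly.
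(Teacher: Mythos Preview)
Your proposal is correct and follows essentially the same route as the paper: a union bound over the $n$ pairs $(K_i,K_j)$, enlargement to the containing balls $B(x_i,1-\rho)$, application of Lemma~\ref{lem:measurebounds_alt} to each pair, and then the Poisson inequality $1-e^{-x}\leq x$ to pass from measure to probability. The only cosmetic difference is that the paper enlarges to balls first and then takes the union bound, whereas you do these in the opposite order; note also that for the bound you only need the inclusion $\L_{\bigcup_i K_i\setminus K_j,\,K_j}\subset\bigcup_{i\neq j}\L_{K_i,K_j}$, which holds regardless of disjointness.
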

\noindent
{\bf Proof.}
Fix $j$ and let $B=\bigcup_{i=1} ^{n+1} B(x_i,1)$. By Lemma 
\ref{lem:measurebounds_org} and a simple union bound we have that
for any $i,$
\[
\md{\L_{\left(B \setminus B(x_i,1)\right),B(x_i,1)}} 
\leq nc_2\frac{1}{r^{d-1}}.
\]
Therefore we get that 
\begin{eqnarray*}
 \lefteqn{\p{\omega_t\left(\L_{\bigcup_{i=1} ^{n+1} K_i\setminus K_j, K_j}\right) \neq 0}
 \leq \p{\omega_t\left(\L_{B \setminus B(x_j,1), B(x_j,1)}\right) \neq 0}} \\
 &&  = 1 - \expo{-t \md{\L_{B \setminus B(x_j,1), B(x_j,1)}}}
 \leq 1 - \expo{-ntc_2\frac{1}{r^{d-1}}}.
\end{eqnarray*}
The result follows by using that $e^{-x}\geq 1-x$ for all $x\in\bbR$.
\fbox{}\\

Lemma \ref{lem:measurebounds_org} applies to 
$\mu_{d,1}(\L_{B(o,1)}\cap \L_{B(r e_1,1)})$
for large values of $r.$ Next, we need to consider the same expression but 
for small values
of $r.$ In contrast to Lemma \ref{lem:measurebounds_org}, no such result 
exists in the literature, and so we provide a full proof.

\begin{prop} \label{prop:smallbeta}
If $d\geq 3$ and $r\leq 2\sqrt{1-4^{-1/(d-2)}}$ then we have that
\begin{equation} \label{eqn:smallr}
\md{\L_{B(o,1)}\cap \L_{B(r e_1,1)}}
\leq 1-\frac{r}{12}.
\end{equation}
In addition, \eqref{eqn:smallr} holds for every $r\leq 2$ when $d=2.$
\end{prop}

\noindent
{\bf Proof.}
Observe that for any fixed $L\in G(d,1),$
the set of $y\in L^{\bot}$ such that $L+y\in \L_K$ is precisely 
$\Pi_{L^\bot}(K).$ From \eqref{eqn:repform1} it then follows that
\begin{eqnarray}\label{eqn:exactmeasure} 
\lefteqn{\md{\L_{B(o,1)}\cap \L_{B(r e_1,1)}}}\\
&&=\md{\L_{B(o,1)}}+\md{\L_{B(r e_1,1)}}-\md{\L_{B(o,1)\cup B(r e_1,1)}}
\nonumber\\
&&=\frac{1}{\kappa_{d-1}}\int_{G(d,1)} \int_{L^\perp}
\mathds{1}(L+y\in \L_{B(o,1)})+\mathds{1}(L+y\in \L_{B(r e_1,1)}) \nonumber \\
& & \hspace{40mm}-\mathds{1}(L+y\in \L_{B(o,1)\cup B(r e_1,1)})
\lambda_{d-1}(dy)
\nu_{d,1}(dL)\nonumber \\
&&=\frac{1}{\kappa_{d-1}}\int_{G(d,1)} \int_{L^\perp}
\mathds{1}(L+y\in \L_{B(o,1)})\mathds{1}(L+y\in \L_{B(r e_1,1)})
\lambda_{d-1}(dy)
\nu_{d,1}(dL)\nonumber \\
&&=\frac{1}{\kappa_{d-1}}\int_{G(d,1)}
\lambda_{d-1}\left(\Pi_{L^\bot}(B(o,1))\cap \Pi_{L^\bot}(B(r e_1,1))\right)
\nu_{d,1}(dL).\nonumber
\end{eqnarray}

If $L\in G(d,1)$ is written as
$L=\{s(l_1,...,l_d)\,:\,s\in {\mathbb R}\}$
where $l_1^2+...+l_d^2=1$, then the  projection matrix 
$\Pi_{L^{\bot}}$ has elements 
$(\Pi_{L^{\bot}})_{ii}=1-l_i^2$ and $(\Pi_{L^{\bot}})_{ij}=-l_i l_j$ for $i\neq j$.
Let $p_r=p_r(L)=\Pi_{L^\bot}((r,0,\ldots,0))$ so that $p_r$ is the projection
of the center of $B(r e_1,1).$ Of course, the projection of $B(o,1)$ onto $L^\bot$
is then a $(d-1)$-dimensional ball of radius 1 centred at $o.$
Straightforward calculations yield that  
\[
p_r=r(1-l_1^2,-l_1l_2,\ldots,-l_1l_d),
\]
so that
\[
|p_r|^2=r^2((1-l_1^2)^2+l_1^2l_2^2+\cdots+l_1^2l_d^2)
=r^2(1-2l_1^2+l_1^2(l_1^2+\cdots+l_d^2))=r^2(1-l_1^2).
\]
Of course, $\Pi_{L^\bot}(B(o,1))$ and $\Pi_{L^\bot}(B(r e_1,1))$ intersects whenever
$|p_r|\leq 2$, or equivalently whenever $ r^2(1-l_1^2)\leq 4.$ 
Since we are assuming
that $r\leq 2,$ this is always satisfied. 
Furthermore, the ($(d-1)$-dimensional) volume of the lens-shaped area
$\Pi_{L^\bot}(B(o,1))\cap \Pi_{L^\bot}(B(r e_1,1))$ is then
the sum of the volumes of two spherical caps of height $h=1-|p_r|/2.$ 
The volume of one such spherical
cap is (see \cite{Li}) given by
\[
\frac{1}{2}\kappa_{d-1}J_{2h-h^2}\left(\frac{d}{2},\frac{1}{2}\right).
\]
As above, $\kappa_{d-1}$ is the volume of the unit ball in $\bbR^{d-1}$
while $J_{2h -h^2}$ denotes a regularized
incomplete beta function. We note that 
$2h-h^2=1-|p_r|^2/4=1-r^2(1-l_1^2)/4,$
so that \eqref{eqn:exactmeasure} becomes
\begin{equation}\label{eqn:intsphere1}
\md{\L_{B(o,1)}\cap \L_{B(r e_1,1)}}
=\int_{G(d,1)}J_{1-r^2(1-l_1^2)/4}\left(\frac{d}{2},\frac{1}{2}\right)
\nu_{d,1}(dL).
\end{equation}
Furthermore, 
\[
J_{1-r^2(1-l_1^2)/4}\left(\frac{d}{2},\frac{1}{2}\right)
=\frac{\int_0^{1-r^2(1-l_1^2)/4}t^{\frac{d}{2}-1}(1-t)^{\frac{1}{2}-1}dt}
{\int_0^{1}t^{\frac{d}{2}-1}(1-t)^{\frac{1}{2}-1}dt}.
\]
Let $D_d:=\int_0^{1}t^{\frac{d}{2}-1}(1-t)^{-\frac{1}{2}}dt$ so that 
\begin{equation} \label{eqn:Jest1}
J_{1-r^2(1-l_1^2)/4}\left(\frac{d}{2},\frac{1}{2}\right)
=1-\frac{1}{D_{d}}\int_{1-r^2(1-l_1^2)/4}^1 
t^{\frac{d}{2}-1}(1-t)^{-\frac{1}{2}}dt.
\end{equation}
Furthermore, we trivially have that (since $d\geq 2,$)
\begin{equation} \label{eqn:Ddbound}
D_{d}=\int_0^1 t^{\frac{d}{2}-1}(1-t)^{-\frac{1}{2}}dt \leq 
\int_0^1 (1-t)^{-1/2}dt=2.
\end{equation}

We proceed to bound the integral on the right hand side of \eqref{eqn:Jest1}
from below. We have that 
\begin{eqnarray*}
\lefteqn{\int_{1-r^2(1-l_1^2)/4}^1 
t^{\frac{d}{2}-1}(1-t)^{-\frac{1}{2}}dt}\\
& & \geq (1-r^2(1-l_1^2)/4)^{\frac{d}{2}-1}\int_{1-r^2(1-l_1^2)/4}^1 
(1-t)^{-\frac{1}{2}}dt\\
& & =(1-r^2(1-l_1^2)/4)^{\frac{d}{2}-1} 2\sqrt{r^2(1-l_1^2)/4} \\
& & =(1-r^2(1-l_1^2)/4)^{\frac{d}{2}-1} r\sqrt{1-l_1^2},
\end{eqnarray*}
so that by \eqref{eqn:intsphere1}, \eqref{eqn:Jest1} and \eqref{eqn:Ddbound} 
we have that 
\begin{eqnarray} \label{eqn:mucapest}
\lefteqn{\mu_{d,1}\left(\L_{B(o,1)}\cap \L_{B(r e_1,1)}\right)}\\
& & \leq \int_{G(d,1)} 1-\frac{1}{2}
\left((1-r^2(1-l_1^2)/4)^{\frac{d}{2}-1} r\sqrt{1-l_1^2} \right) \nu_{d,1}(dL) 
\nonumber \\
& & =1-\frac{r}{2}\int_{G(d,1)}
(1-r^2(1-l_1^2)/4)^{\frac{d}{2}-1} \sqrt{1-l_1^2}  \nu_{d,1}(dL), \nonumber
\end{eqnarray}
which uses that $\nu_{d,1}(G(d,1))=1$ (see Section \ref{sec:models}).

In order to estimate the right hand side of \eqref{eqn:mucapest},
consider the sets $G_k:=\{L\in G(d,1): l_k^2>1/2\}.$ 
Since $l_1^2+\cdots+l_d^2=1,$ we clearly have that
$G_{k} \cap G_{m}=\emptyset$ if $k\neq m$. Therefore
\[
1=\nu_{d,1}(G(d,1))\geq \nu_{d,1}(\cup_{k=1}^d G_k)=d\nu_{d,1}(G_1),
\]
so that $\nu_{d,1}(G_1) \leq 1/d.$ We get that 
\begin{equation} \label{eqn:IntGestimate}
\int_{G(d,1)} \mathds{1}(|l_1|\leq 1/\sqrt{2})\nu_{d,1}(dL)
=1-\nu_{d,1}(G_1)\geq \frac{d-1}{d}.
\end{equation}
We will split \eqref{eqn:mucapest} into two cases. 
First, consider $d=2,$ so that 
\begin{eqnarray*}
\lefteqn{\mu_{2,1}({\L_{B(o,1)}\cap \L_{B(r e_1,1)}})}\\
& & \leq 1-\frac{r}{2}\int_{G(2,1)} \sqrt{1-l_1^2}\nu_{2,1}(dL)\\
& & \leq 1-\frac{r}{2}\int_{G(2,1)}\sqrt{1-l_1^2} \mathds{1}(|l_1|\leq 1/\sqrt{2})\nu_{2,1}(dL)\\
& & \leq 1-\frac{r}{2\sqrt{2}}
\int_{G(2,1)} \mathds{1}(|l_1|\leq 1/\sqrt{2})\nu_{2,1}(dL)
\leq 1-\frac{r}{4 \sqrt{2}},
\end{eqnarray*}
where we use \eqref{eqn:IntGestimate} in the last inequality.

Second, consider any $d\geq 3.$ Since $r\leq 2\sqrt{1-4^{-1/(d-2)}}$
it follows that 
\[
1-\frac{r^2}{4}\geq 2^{-2/(d-2)}.
\]
We therefore get that
\[
(1-r^2(1-l_1^2)/4)^{\frac{d}{2}-1}
\geq (1-r^2/4)^{\frac{d-2}{2}}\geq \frac{1}{2}.
\]
Hence, by \eqref{eqn:mucapest} and the above, 
we conclude that
\begin{eqnarray*}
\lefteqn{\md{\L_{B(o,1)}\cap \L_{B(r e_1,1)}}}\\
& & \leq 1-\frac{r}{2}\int_{G(d,1)}
(1-r^2(1-l_1^2)/4)^{\frac{d}{2}-1} \sqrt{1-l_1^2}
\nu_{d,1}(dL) \\
& & \leq 1-\frac{r}{4}\int_{G(d,1)} \sqrt{1-l_1^2}
\nu_{d,1}(dL)\\
& & \leq 1-\frac{r}{8}\int_{G(d,1)} \mathds{1}(|l_1|\leq 1/\sqrt{2})
\nu_{d,1}(dL)\\
& & \leq 1-\frac{r}{8}\frac{d-1}{d}
\leq 1-\frac{r}{12},
\end{eqnarray*}
where we use \eqref{eqn:IntGestimate} in the penultimate inequality.
\fbox{}\\

Define
\begin{equation}\label{eqn:betadef}
\beta(\rho,k) := \md{\L_{B(o,1)\cup B\left(2^k\rho e_1, 1\right)}}
\end{equation}
The next lemma gives both upper and lower bounds for $\beta(\rho,k)$, that 
will be used in the proofs of Lemma \ref{lem:boundsumofpairs}. 
The proof is an application of the previous proposition.
\begin{lemma}\label{lem:boundbeta}
 We have that $1 + \frac{2^k}{12}\rho< \beta(\rho,k) <2$, for 
 all $k$ such that 
 $2^k \rho \leq 2\sqrt{1-4^{-1/(d-2)}}$ for $d\geq 3$, or 
 $2^k \rho \leq 2$ when $d=2.$
\end{lemma}

\noindent
{\bf Proof.}
Clearly, $\beta(\rho,k) < 2\md{\L_{B(o,1)}} =2$. For the lower bound, note that, 
\begin{eqnarray*}
\lefteqn{\beta(\rho,k)
=\md{\L_{B(o,1)}\cup \L_{B\left(2^k \rho e_1,1\right)}}}\\
& & =\md{\L_{B(o,1)}}+\md{\L_{B\left(2^k \rho e_1,1\right)}}
-\md{\L_{B(o,1)}
\cap \L_{B\left( 2^k \rho e_1,1\right)}}\\
& & \geq 2-\left(1-2^k\frac{\rho}{12}\right)
= 1+\frac{2^k}{12}  \rho ,
\end{eqnarray*}
by using Proposition \ref{prop:smallbeta} with 
$r=2^k \rho.$ 
\fbox{}\\

\section{The packing lemma}\label{sec:packing}
As explained in the introduction, if $\tau$ is the 
cover time of $\BD$, then the set which is uncovered at time 
$\tau(1-\epsilon)$ consists (with high probability) of small and 
distant sets (this is Proposition \ref{prop:Gbound}). 
The proof of this is based on a second moment
argument, which involves a sum over pairs of points in $\BD$
(see Lemma \ref{lem:boundsumofpairs}).
In order to get a good bound for this sum, we will have to get 
estimates on the maximal number of points in $\BD$ within 
a certain distance of a fixed point $x\in \BD.$ In general, we need 
to obtain an estimate on the maximum number of $x\in \BD$ that belongs 
to a certain bounded subset of $\BR^d$. This is the main purpose of 
this section, and the result is presented in Lemma \ref{lem:packing} 
below, sometimes referred to as the packing lemma. However, in order 
to prove this lemma it is also convenient 
to introduce the concept of a maximal, $\rho$-separated subset of a bounded 
set $A.$ This maximal subset will play a crucial role in transferring 
results for finite sets (i.e. Theorem \ref{thm:fluct}) to the non-discrete
case (i.e. Theorem \ref{thm:main}).

Recall therefore the definition of a $\rho$-separated set. 
Given a bounded set $A \subset \bbR^d$ and $\rho \in (0,1),$ we let
$\BA^\rho \subset A$ be a $\rho$-separated set chosen so that the cardinality
$|\BA^\rho|$ is maximal among all possible choices of $\rho$-separated subsets of 
$A.$ There can be many candidates for such a set, and 
we therefore assume that $\BA^\rho$ is picked among these 
according to some predetermined rule. We define the ''discretization'' 
operator $\Delta^{\rho}$ which maps the set $A\subset \BR^d$ to the 
corresponding $\rho$-separated set $\BA^\rho$ so that $\Delta^\rho(A)=\BA^\rho.$ 
The introduction of $\Delta^\rho$ is only done in order to facilitate what would 
otherwise be overly cumbersome notation; it is more convenient to write 
$\Delta^\rho(A \cup B)$ or $\Delta^\rho(\alpha A)$ 
than any alternative. Of course, $\alpha>0$ and 
$\alpha A=\{\alpha x:x\in A\}$ for any set $A$. While we do not explicitly 
state which rule we use (as it will not be important), 
we stress that given $A \subset \BR^d,$ this rule determines $\BA^\rho$
without ambiguity. Furthermore, we shall assume that the rule we pick is 
such that for any $\alpha>0,$ 
\begin{equation} \label{eqn:scaleinv}
\alpha \BA^\rho=\alpha\Delta^\rho(A)=\Delta^{\alpha \rho}(\alpha A).
\end{equation}
The interpretation of \eqref{eqn:scaleinv} is simply that the discrete 
$\rho$-separated subset obtained from $A$ by $\Delta^\rho$, scaled by 
$\alpha>0$ (so that the separation is now $\alpha \rho$), is the same as 
if we started with the ($\alpha$-) scaled version of $A$, and {\em then} took the 
discrete $\alpha \rho$-separated subset.



Our first lemma establishes three results for $\BA^\rho$. This lemma will 
be used repeatedly in Section \ref{sec:mainproof}, but it will also 
be used in the proof of Lemma \ref{lem:packing}.
\begin{lemma}\mbox{}\label{lem:sizeofA}
\begin{itemize}

\item[a)]  For any $A\subset \BR^d$ bounded and all 
$\rho<\delta$ we have that $\abs{\BA^\delta}\leq \abs{\BA^\rho}$.

\item[b)]  Let $A_1\cap A_2=\emptyset$. 
Then we have that 
$|\Delta^\rho(A_1\cup A_2)|\leq |\BA_1^\rho|+|\BA_2^\rho|.$

\item[c)] For any set $A$ we have that for every $0<\rho<1,$
\[
|\BA^\rho|\leq 6^d \rho^{-d}|\BA^1|.
\]
\end{itemize}

\end{lemma}

\noindent
{\bf Proof.}
For part $a)$, simply observe that any $\delta$-separated set is also 
$\rho$-separated. Thus, $|\BA^\delta|\leq 
\max\{|\BD|:\BD \subset A, \Sep(\BD)\geq \rho\}=|\BA^\rho|$.

For part $b)$, assume that 
$|\Delta^\rho(A_1\cup A_2)|>|\BA_1^\rho|+|\BA_2^\rho|.$  
Then we get that 
\[
|\BA_1^\rho|+|\BA_2^\rho|
<|\Delta^\rho(A_1\cup A_2)\cap A_1|+|\Delta^\rho(A_1\cup A_2)\cap A_2|,
\]
and so without loss of generality we may assume that 
$|\BA_1^\rho|<|\Delta^\rho(A_1\cup A_2)\cap A_1|.$ 
However, $\Delta^\rho(A_1\cup A_2)\cap A_1$ is a 
$\rho$-separated set in $A_1,$ so this would contradict the maximality 
of $\BA_1^\rho.$

For part $c),$ we start by showing that 
$A \subset \bigcup_{x\in \BA^\rho} B(x,\rho)$. For contradiction, 
observe that if 
$A \not \subset \bigcup_{x\in \BA^\rho} B(x,\rho)$ 
we can find $y \in A \setminus \bigcup_{x\in \BA^\rho} B(x,\rho)$. 
Then, $y\in A$ and $\di{x}{y}\geq \rho$ for all $x\in \BA^\rho$. 
Therefore the set  $\{x_1, \dots, x_{|\BA^\rho|}, y\}\subset A$ is 
$\rho$-separated, contradicting the maximality of $|\BA^\rho|$.

Next, let $x_1, \dots x_{|\BA^1|}$ be an enumeration of the 
points in the set $\BA^1$. 
Set $D_1 = B(x_1,1)$ and then iteratively, let 
$D_j = B(x_j,1)\setminus \cup_{i=1}^{j-1}B(x_i,1).$ 
Obviously, $D_j \subset B(x_j,1)$, $D_i \cap D_j = \emptyset$ 
and $A \subset \cup_{i=1}^{|\BA^1|} B(x_i,1)=\cup_{i=1}^{|\BA^1|} D_i$. 
By using part $b)$ of this lemma we have that 
\begin{equation}\label{eqn:ABbound}
|\BA^\rho|\leq \left|\Delta^\rho \left(\bigcup_{i=1}^{|\BA^1|} D_i\right)\right|
\leq \sum_{i=1}^{|\BA^1|} |\BD_i^\rho| 
\leq \abs{\BA^1}|\BD_1^\rho|.
\end{equation}
Consistent with our previous notation, let 
$\BB(x,r)^\rho$ denote the maximal $\rho$-separated subset of the ball $B(x,r).$
We proceed to bound $|\BD_1^\rho|=\abs{\BB(o,1)^\rho}$.
To that end, note that
\[
\bigcup_{x \in \BB(o,1)^\rho}B(x,\rho/3) \subset B(o,1+\rho)
\]
and that $B(x_i,  \rho/3)\cap B(x_j, \rho/3) = \emptyset$ for all 
distinct $x_i,x_j \in \BB(o,1)^\rho$, since $\di{x_i}{x_j} \geq \rho$ 
by the definition of $\BB(o,1)^\rho$. Therefore (recall that $\lambda_d$ 
denotes $d$-dimensional Lebesgue measure),
\[
\abs{\BB(o,1)^\rho} \lambda_d(B(x_1, \rho/3)) \leq \lambda_d(B(o,1+\rho)) 
= \kappa_d (1+ \rho)^d \leq \kappa_d 2^d,
\] 
where we used $\rho < 1$ in the last inequality.
Since $\lambda_d(B(x_1,\rho/3)) = \kappa_d (\rho/3)^d$, it follows that 
$\abs{\BB(o,1)^\rho} \leq 6^{d} \rho^{-d}$. Inserting this into 
\eqref{eqn:ABbound} yields the result.
\fbox{}\\

We are now ready to state and prove our packing lemma.
Recall that $c$ represent a constant, depending only on the dimension $d$, 
and that it may change from line to line.

\medskip

\begin{lemma}\label{lem:packing}
There exists a constant $c < \infty $ depending on $d$ only such that
for any $\BD$ with $\Sep(\BD)\geq \rho$ we have that 
 \begin{enumerate}
  \item[a)] For any $\rho < r$, $\abs{\BD\cap B(o,r)} \leq c\rho^{-d}r^d$.
  \item[b)] For any $y \in \BD$ and $\rho<1$ we have that 
$\sum_{x\in \BD\setminus B(y,1)} \di{y}{x}^{1-d}
\leq c \rho^{-d}|\BD|^{1/d}. $
 \end{enumerate}
\end{lemma}

\noindent
{\bf Proof.}
\begin{enumerate}
\item[a)] Similar to the proof above, let 
$\{x_1,\dots, x_N\} = \BD \cap B(o,r)$ and observe that
$B(x_i, \rho) \subset B(o,r+\rho)$ for all $i = 1,\dots, N$. 
For any $j \neq i$ we have that 
$B(x_i,\rho/3)\cap B(x_j,\rho/3) = \emptyset$, since 
$\di{x_i}{x_j} \geq \rho.$ Therefore,
\[
 |\BD\cap B(o,r)| \lambda_d(B(x_1, \rho/3)) \leq  \lambda_d(B(o,r+\rho)) 
 = \kappa_d (r+ \rho)^d \leq c r^d,
\]
 where we used $\rho < r$ in the last inequality.
 Since $\lambda_d(B(x_1,\rho/3)) = \kappa_d (\rho/3)^d$ we have that
\[
|\BD\cap B(o,r)|\leq c \rho^{-d}r^{d}
\]
 and the proof is complete.

 \item[b)] Let 
$N(\rho,r, \BD)=\abs{\BD \cap \left(B(o,r+1)\setminus B(o,r)\right)}$. 
Our first step will be to estimate $N(\rho,r,\BD).$ Therefore,
let $\{x_1, \dots, x_{N(\rho,r,\BD)}\}$ denote the points in 
$\BD \cap \left(B(o,r+1)\setminus B(o,r)\right)$. For any 
$i = 1, \dots, N(\rho,r,\BD)$ we have that 
$B(x_i, \rho/3) \subset \left(B(o,r+2)\setminus B(o,r-1)\right)$. 
Also, for any $j \neq i$ we have that 
$B(x_i, \rho/3)\cap B(x_j, \rho/3) = \emptyset$, since 
$\di{x_i}{x_j} \geq \rho$ by the assumption on $\BD$. Thus,
 \begin{eqnarray*}
 \lefteqn{N(\rho,r,\BD) \lambda_d(B(x_1, \rho/3))}\\
 & & \leq \lambda_d(B(o,r+2)\setminus B(o,r-1)) 
 =\kappa_d \left((r+2)^d - (r-1)^d\right) \leq c r^{d-1}.
 \end{eqnarray*}
 Since $\lambda_d(B(x_1,\rho/3)) = \kappa_d (\rho/3)^d$ we conclude that
\begin{equation} \label{eqn:NDrhoest}
N(\rho,r,\BD) \leq c \rho^{-d}r^{d-1}.
\end{equation}

We assume without loss of generality that $y=o.$
Order the points $\{y_1,\dots,y_M\}=\BD \setminus\{o\}$ 
such that  $\di{o}{y_i} \leq \di{o}{y_j}$ for all $i <j$. Then, let 
 $N_k=|\BB(o,k)^\rho|$ and observe that for any $i>N_k$ we must have that
 $d(o,y_i)>k.$ Then, note that by Lemma \ref{lem:sizeofA} part $b)$, 
 and \eqref{eqn:NDrhoest}, we get that for $k\geq 2,$
 \begin{eqnarray} \label{eqn:Nkest}
 \lefteqn{N_k-N_{k-1}=|\BB(o,k)^\rho|-|\BB(o,k-1)^\rho| }\\
 & & \leq \left|\Delta^\rho \left(B(o,k)\setminus B(o,k-1)\right)\right| 
 \leq c \rho^{-d} k^{d-1}
 \leq c 2^{d-1}\rho^{-d}(k-1)^{d-1}. \nonumber
 \end{eqnarray}
We now define $K:=\max \{k: N_k < |\BD|\}$ and
consider the box $S_K=[-K/\sqrt{d},K/\sqrt{d}]^d\subset B(o,K)$. 
We then have that 
$\abs{\BS_K^\rho}\leq \abs{B(o,K)^\rho}=N_K$. Consider then 
the set $\rho \bbZ^d \cap S_K$ where 
$\rho \bbZ^d$ is the $d$-dimensional hypercubic lattice whose vertices 
are at distance $\rho$. Trivially, $\rho \bbZ^d \cap S_K$ is a 
$\rho$-separated set in $S_K$ and so we have that
$\abs{\BS_K^\rho} \geq \abs{\rho \bbZ^d \cap S_K} 
\geq 
c \rho^{-d}K^d$, by the maximality of 
$\abs{\BS_K^\rho}$. Therefore,
\[
c \rho^{-d}K^d \leq |\BS_K^\rho|\leq N_K < |\BD|,
\]
and therefore, $K \leq c \rho |\BD|^{1/d}.$ 

By part $a)$, $N_1 \leq c \rho^{-d}$ and so by letting $N_0=0$ and 
using Equation \eqref{eqn:Nkest} we get that,
\begin{eqnarray}
\lefteqn{\sum_{x\in \BD\setminus B(o,1)}\di{o}{x}^{1-d} 
\leq \sum_{i=1}^{|\BD|} \max(\di{o}{x_i},1)^{1-d}}\\
& &  
=\sum_{k=1}^{K+1} \sum_{i=N_{k-1}+1}^{\min(N_k,N)} \max(\di{o}{x_i},1)^{1-d} \nonumber \\
& & \leq N_1 +\sum_{k=2}^{K+1}(N_{k}-N_{k-1}) (k-1)^{1-d} \nonumber \\
& & \leq N_1 +\sum_{k=2}^{K+1}c \rho^{-d} (k-1)^{d-1} (k-1)^{1-d} \nonumber \\
& & \leq c \rho^{-d} +K c \rho^{-d}
\leq c \rho^{-d} +c \rho^{1-d}|\BD|^{1/d}
\leq c\rho^{-d} |\BD|^{1/d}\nonumber 
\end{eqnarray}
where we use that $\max(d(o,x_i),1)\geq 1$ for $1 \leq i \leq N_1$ 
and the observation
above that $d(o,x_i)\geq k-1$ for $N_{k-1}+1\leq i \leq N_k.$
\end{enumerate}
\fbox{}\\

\section{The fluctuation theorem and proof of Theorem \ref{thm:maindiscrete}}\label{sec:fluct}
The purpose of this section is to prove Theorem \ref{thm:maindiscrete}.
This will be done by proving Theorem \ref{thm:fluct}, which provides
a bound for the fluctuation of $\BP((\cT(\BD)-\log |\BD|\leq z)$ 
from the Gumbel distribution function. As mentioned in the introduction, 
the strategy is similar to that of \cite{Belius}.

In our first result we compare the cover time of a set with prescribed
separation, and the cover time of an ''independent'' (or infinitely 
separated if you will) set of the same cardinality.
The proof uses Lemmas \ref{lem:almostindependecy} and \ref{lem:boundunion}.

\begin{prop}\label{prop:almostindependentcover}
Let $m > 0$  and let $\BD$ be a finite set 
such that $\Sep(\BD)\geq \max((n/2)^\frac{m+2}{d-1},4)$.
Then, for $t\geq 0$ we have that
\[
\abs{\p{\cT\left(\BD\right)\leq t} 
- \p{\cT(o)\leq t}^n} \leq tc (n/2)^{-m}.
\]
\end{prop}

\noindent
{\bf Proof.}
Let $\BD=\{x_i\}_{i=1}^n,$ 
$\BD_i := \BD \setminus \{x_k\}_{k=1}^i$ 
and $B_i=\bigcup_{x\in \BD_i}B(x,1)$ . Then 
\begin{eqnarray*}
\lefteqn{\abs{\p{\cT(\BD)\leq t} 
- \p{\cT(o)\leq t}^n}}\\
 & &\leq \abs{\p{\cT(\BD) \leq t} - \p{\cT(\BD_1)\leq t}
 \p{\cT(x_1) \leq t}} \\
 & & \ \ + \p{\cT(o)\leq t}\abs{\p{\cT(\BD_1)\leq t} - \p{\cT(o) \leq t}^{n-1}},
\end{eqnarray*}
by translation invariance. 
Using Lemma \ref{lem:almostindependecy}, we get
\[
\abs{\p{\cT(\BD) \leq t} - 
\p{\cT(\BD_1)\leq t}\p{\cT(x_1) \leq t}}
\leq 4 \p{\omega_t\left(\L_{B_1 , B(x_1,1)}\right) \neq 0},
\]
and thus
\begin{eqnarray*}
\lefteqn{\abs{\p{\cT(\BD)\leq t} - \p{\cT(o)\leq t}^n}} \\
&&\leq 4\p{\omega_t\left(\L_{B_1 , B(x_1,1)}\right) \neq 0}  + \abs{\p{\cT(\BD_1)\leq t} - \p{\cT(o)\leq t}^{n-1}}.
\end{eqnarray*}
Repeating the same steps $n-1$ more times, we get:
\begin{eqnarray}
 \lefteqn{\abs{\p{\cT(\BD)\leq t} 
 - \p{\cT(o)\leq t}^n}} \nonumber\\
 && \leq 4\left( \p{\omega_t\left(\L_{B_1 , B(x_1,1)}\right)\neq 0}+\dots+ \p{\omega_t(\L_{B(x_{n},1),B(x_{n-1},1)})\neq 0}\right).\nonumber
\end{eqnarray}
We let $r:=\Sep(\BD)$ (we chose not to use $\rho,$ since here we think of 
the separation as being large rather than small).
Applying Lemma \ref{lem:boundunion} (which requires $r\geq 4$) to the 
above gives 
\begin{eqnarray*}
 \lefteqn{\abs{\p{\cT(\BD)\leq t} - \p{\cT(o)\leq t}^n}} \\
 &&\leq c \left((n-1) \frac{t}{r^{d-1}} +(n-2)\frac{ t}{r^{d-1}} + \dots + \frac{t}{r^{d-1}}\right)\\
 &&= c\frac{n(n-1)}{2}\frac{t}{r^{d-1}}.
\end{eqnarray*}
Therefore,
$$\abs{\p{\cT(\BD)\leq t} 
- \p{\cT(o)\leq t}^n} \leq n^2c \frac{ t}{r^{d-1}}.$$
Since $r \geq (n/2)^\frac{m+2}{d-1}$, we have that 
$r^{-(d-1)} \leq (n/2)^{-(m + 2)}$ and thus
\[
\abs{\p{\cT(\BD)\leq t} 
- \p{\cT(o)\leq t}^n}
\leq n^2 c\frac{t}{r^{d-1}} 
\leq t c (n/2)^{-m}.
\]
\fbox{}\\

Before we can proceed, we need to introduce some convenient notation.
For $0<\epsilon<1,$ let
\begin{equation} \label{eqn:Aepsdef}
\BD_\epsilon: = \{x \in \BD : \cT(x)
 >(1-\epsilon)\log |\BD|\}
\end{equation}
for any finite set $\BD$. 
The set $\BD_\epsilon$ corresponds to the subset of points 
$x \in \BD$ that are not covered at time $(1-\epsilon)\log\abs{\BD}$.  
Furthermore, we will let 
\begin{equation}\label{eqn:defCd}
C_d:=\sqrt{1-4^{-1/(d-2)}}
\end{equation}
where we interpret $C_2=1.$ Furthermore, let
\begin{equation}\label{eqn:defCdtilde}
\tilde{C}_d:=\frac{C_d}{12(1+C_d)}.
\end{equation}

Our next aim is Lemma \ref{lem:boundsumofpairs}, which will provide 
bounds on two sums 
(over pairs of points in $\BD$) of the probabilities of two distinct 
points being in the set $\BD_\epsilon$. 
This lemma is a preparation for the second moment argument of 
Proposition \ref{prop:Gbound} (mentioned in the introduction).
The strategy of the proof of Lemma \ref{lem:boundsumofpairs} is to 
split the sum into three parts.
The first part is a sum over all pairs of points that are within distance 
$C_d$ of each other, the second sums over pairs within the
intermediate region between $C_d$ and $\log |\BD|,$
while the last sums over distant points.
In order to obtain the desired result, we then use properties of 
$\mu_{d-1}$ along with the packing Lemma \ref{lem:packing}
and the estimate provided in Proposition \ref{prop:smallbeta}.

As we have mentioned before, our key results require that our sets
are separated ''enough''. To that end we shall have need of the following 
definition.
\begin{defi}\label{def:good}
Let $\BD$ be a finite set and let $\rho=\Sep(\BD).$ We say that $\BD$
is {\em good} if it satisfies the following conditions:
 $|\BD|^{\rho/800} \geq 2$, $|\BD|^{1/(2d)}\geq 4$ 
 and $\rho^{-d}(\log\abs{\BD})^d \leq \abs{\BD}^{\tilde{C}_d/2}.$ 
\end{defi}
\noindent
{\bf Remarks:} The definition of $\BD$ being good might look somewhat construed.
However, it will be convenient to have such a definition as we will appeal 
to it on many occasions below. It can be useful to
simply think of a good set $\BD$ as being ''large enough'' and 
''separated enough'', and the notion of goodness makes this precise. 

\medskip

The following lemma is stated without a proof as it is trivial to check.
One might argue that the condition in this lemma can be taken as the 
definition of good. However, it will be more convenient to have the three
explicit conditions of Definition \ref{def:good} at hand.

\begin{lemma} \label{lemma:goodset}
There exists a constant $D=D(d)<\infty$ such that any $\BD$ with 
\[
\Sep(\BD) \geq \log 2 \frac{800}{\log |\BD|}
\]
and $|\BD|\geq D$ is good.
\end{lemma}

\begin{lemma}\label{lem:boundsumofpairs}
Let $\BD$ be a good set. Then, for every  $\epsilon$ such that 
$\epsilon < \min\left(\frac{\rho}{36},\frac{5}{36},\frac{C_d}{148}\right)$
and $|\BD|^\epsilon \geq 2$
we have that
 \begin{enumerate}[label = \emph{\alph*})]
  \item $\sum\limits_{\substack{x,y\in \BD\\ x \neq y}} 
  \p{x,y \in \BD_\epsilon} < c\abs{\BD}^{-\epsilon} + \abs{\BD}^{2\epsilon}$
  \item $\sum\limits_{\substack{x,y \in \BD\\ 0 < \di{x}{y} < b \abs{\BD}^{1/2d}}} \p{x,y \in \BD_\epsilon} < cb^d\abs{\BD}^{-\epsilon},$
 \end{enumerate}
   for every $b\geq 1$, and where $c$ is a constant depending only on 
   the dimension $d$.
\end{lemma}
\noindent
{\bf Remark:} If it is the case that $\rho$ is so small that 
no such $\epsilon$ exists, then the statement is vacuous. 
A similar comment applies to other statements below. 


\medskip

\noindent
{\bf Proof.}
We will prove both statements by considering the sum 
 \[
 I= \sum\limits_{\substack{x,y \in \BD \\ 0 < \di{x}{y} \leq a}} \p{x,y \in \BD_\epsilon}
 \]
 and choosing appropriate values for $a$ later. 
Let us split $I$ into three parts:
 \begin{equation}\label{eq:I1}
 I_1 = \sum\limits_{\substack{x,y \in \BD \\ 
 \rho\leq \di{x}{y} < C_d}} \p{x,y \in \BD_\epsilon} ,
 \end{equation}
 \begin{equation}\label{eq:I2}
 I_2 = \sum\limits_{\substack{x,y \in \BD \\ C_d\leq \di{x}{y} <\log\abs{\BD}}} \p{x,y \in \BD_\epsilon},
 \end{equation}
and
\begin{equation}\label{eq:I3}
 I_3 = \sum\limits_{\substack{x,y \in \BD \\ \log\abs{\BD}\leq  \di{x}{y} \leq a}} \p{x,y \in \BD_\epsilon}. 
\end{equation}
Of course, it could be that $\rho\geq C_d$ or even that 
$\rho\geq \log |\BD|$. If so, $I_1$ and/or $I_2$ are simply zero while 
the analysis of $I_3$ remains the same. We will therefore assume that 
$\rho<C_d$ without any further comment.

In all three sums, we will use the following.
\begin{eqnarray}\label{eq:I1andI2}
\lefteqn{\p{x,y \in \BD_\epsilon}}\\
&&= \p{\{\cT(x) > (1-\epsilon) \log |\BD| \} 
\cap \{\cT(y) > (1-\epsilon) \log |\BD|\}}
\nonumber\\
&&= \p{\omega_{(1-\epsilon) \log |\BD|}
\left(\L_{B(x,1)}\cup \L_{B(y,1)}\right) = 0}\nonumber\\
&&= \expo{-(1-\epsilon) \log |\BD|
\md{\L_{B(x,1)} \cup \L_{B(y,1)}}}.\nonumber
\end{eqnarray}

\noindent
{\bf The sum $I_1$:}
Let $K = \lfloor \frac{\log C_d-\log \rho}{\log 2} \rfloor,$ 
where $\lfloor \cdot \rfloor$ denotes the integer part. Note that 
\[
2^{K+1} \rho \leq 2^{\frac{\log C_d-\log \rho}{\log 2} +1}\rho = 2C_d 
\textrm{ while } 
2^{K+1}\rho \geq 2^{\frac{\log C_d-\log \rho}{\log 2}}\rho = C_d.
\]
Note also that $K \geq 0$ since $\rho <C_d.$ 
Thus, \eqref{eq:I1} becomes
\begin{equation}\label{eqn:I1_2}
I_1 = \sum\limits_{\substack{x,y \in \BD \\ \rho\leq \di{x}{y} < C_d}} \p{x,y \in \BD_\epsilon}
\leq \sum_{k=0}^{K}\sum\limits_{\substack{x,y \in \BD \\ 2^k\rho\leq \di{x}{y} < 2^{k+1}\rho}} \p{x,y \in \BD_\epsilon}.
\end{equation}
For $2^k\rho \leq \di{x}{y} < 2^{k+1} \rho$, we have that
\begin{eqnarray*}
\lefteqn{\md{\L_{B(x,1)} \cup \L_{B(y,1)}} 
=\md{\L_{B(o,1)} \cup \L_{B(d(x,y)e_1,1)}} }\\
& & =\md{\L_{B(o,1)}}
+\md{\L_{B(d(x,y) e_1,1)}}
-\md{\L_{B(o,1)} \cap \L_{B(d(x,y) e_1,1)}}\\
& & \geq \md{\L_{B(o,1)}}
+\md{\L_{B(2^k\rho e_1,1)}}
-\md{\L_{B(o,1)} \cap \L_{B(2^k\rho e_1,1)}}\\
& & =\md{\L_{B(o,1)} \cup \L_{B(2^k \rho e_1,1)}}
\end{eqnarray*}
where the inequality follows since any line 
$L\in \L_{B(o,1)} \cap \L_{B(d(x,y)e_1,1)}$ must also 
belong to $\L_{B(o,1)} \cap \L_{B(2^k \rho e_1,1)}$.
Thus
\begin{eqnarray*}
\lefteqn{\expo{-(1-\epsilon) \log |\BD|
\md{\L_{B(x,1)} \cup \L_{B(y,1)}}} } \\
&&\leq \expo{-(1-\epsilon) \log |\BD|
\md{\L_{B(o,1)} \cup \L_{B(2^k \rho e_1,1)}}} \\
&&= \expo{-(1-\epsilon)\log\abs{\BD} \beta(\rho,k)}.
\end{eqnarray*}

Therefore, \eqref{eq:I1andI2}, \eqref{eqn:I1_2} and the above gives 
us that
\begin{eqnarray*}
\lefteqn{I_1 \leq \sum_{k=0}^{K}\sum\limits_{\substack{x,y \in \BD \\ 2^k\rho\leq  \di{x}{y} < 2^{k+1}\rho}} \expo{-(1-\epsilon)\log\abs{\BD} \beta(\rho,k)}}\\
&& \leq \sum_{k=0}^K \abs{\BD}^{1-(1-\epsilon)\left(1+\frac{2^k}{12}\rho\right)}c(2^{k+1}\rho)^d \rho^{-d} ,
\end{eqnarray*}
where we used Lemma \ref{lem:packing} part $a)$ and Lemma \ref{lem:boundbeta}
(which we can use since $2^K \rho \leq C_d$ as noted above). 
Furthermore, for any $k$  we claim that
\begin{equation}\label{eq:rhoepsi}
1 -(1-\epsilon)\left(1+\frac{2^k}{12}\rho\right) 
= -\frac{2^k}{12}\rho + \epsilon + \frac{2^k}{12}\epsilon \rho 
\leq - 2^k \epsilon.
\end{equation}
Indeed, \eqref{eq:rhoepsi} is equivalent to
\[
\epsilon \leq \rho \frac{\frac{1}{12}2^k}{1 + \frac{1}{12}2^k \rho + 2^k} 
= \rho \frac{\frac{1}{12}}{2^{-k} + \frac{1}{12} \rho + 1},
\]
and this holds since $\epsilon\leq \frac{\rho}{36}\leq \rho \frac{\frac{1}{12}}{2^{-k} + \frac{1}{12} \rho + 1}$ by our assumption on $\epsilon$.
Using \eqref{eq:rhoepsi} we get that
\begin{eqnarray*}
\lefteqn{I_1 \leq c \sum_{k=0}^K \abs{\BD}^{- 2^k \epsilon} 2^{d(k+1)}
\leq c\abs{\BD}^{-\epsilon} \sum_{k=0}^\infty \abs{\BD}^{- (2^k-1)\epsilon} 2^{d(k+1)}}\\
& & =c|\BD|^{-\epsilon}\left(2^d+|\BD|^{-\epsilon} 2^{2d}+\sum_{k=2}^\infty \abs{\BD}^{- (2^k-1)\epsilon} 2^{d(k+1)} \right) \\
& & \leq c|\BD|^{-\epsilon}\left(1+\sum_{k=1}^\infty |\BD|^{-2^k \epsilon} 2^{dk}
\right).
\end{eqnarray*}
Furthermore,
\begin{eqnarray*}
\lefteqn{\sum_{k=1}^\infty |\BD|^{-2^k \epsilon} 2^{dk}
\leq c\sum_{k=1}^\infty \int_{2^{k-1}}^{2^k} |\BD|^{-x \epsilon} x^{d-1} dx
\leq c\int_0^\infty e^{-x \epsilon \log |\BD|} x^{d-1} dx}\\
& & =c\left[x^{d-1} \frac{e^{-x \epsilon \log |\BD|} }{-\epsilon \log |\BD| }\right]_0^\infty
+c\frac{d-1}{\epsilon \log |\BD|}
\int_0^\infty e^{-x \epsilon \log |\BD|} x^{d-2} dx\\
& & =\cdots=\frac{c}{\left(\log |\BD|^{\epsilon}\right)^d},
\end{eqnarray*}
and by assumption $\log |\BD|^{\epsilon}\geq \log 2$ and so
\begin{equation}\label{eq:I1bounded}
I_1\leq  c |\BD|^{-\epsilon}.
\end{equation}

\noindent
{\bf The sum $I_2$:}
Consider now Equation \eqref{eq:I2}. Equation \eqref{eq:I1andI2} 
implies that
\begin{eqnarray*}
\lefteqn{I_2 = \sum\limits_{\substack{x,y \in \BD \\ 
C_d \leq  \di{x}{y} < \log\abs{\BD}}} \p{x,y \in \BD_\epsilon}
}\\
&& = \sum\limits_{\substack{x,y \in \BD 
\\ C_d \leq \di{x}{y} < \log\abs{\BD}}} 
\expo{-(1-\epsilon)\log\abs{\BD}\md{\L_{B(o,1)}\cup\L_{B(\di{x}{y} e_1,1)} }}\\
&& \leq \sum\limits_{\substack{x,y \in \BD \\ C_d \leq \di{x}{y} < \log\abs{\BD}}} \expo{-(1-\epsilon)\log\abs{\BD}\md{\L_{B(o,1)}\cup\L_{B\left(C_d e_1,1\right)} }},
\end{eqnarray*}
since $d(x,y)\geq C_d$. 
Now, Lemma \ref{lem:boundbeta} applied to $\rho=C_d$ and $k=0$, implies 
that $\md{\L_{B(o,1)}\cup\L_{B(C_d e_1,1)} } 
\geq 1+\frac{C_d}{12}$, 
which gives
 \begin{eqnarray*}
 \lefteqn{I_2 \leq \sum\limits_{\substack{x,y \in \BD \\ C_d\leq \di{x}{y} < \log\abs{\BD}}} \expo{-(1-\epsilon)\log\abs{\BD}\md{\L_{B(o,1)}
 \cup\L_{B\left(C_d e_1,1\right)} }}}\\
 &&\leq c\abs{\BD}\rho^{-d}(\log\abs{\BD})^d 
 \expo{-(1-\epsilon)\left(1+C_d/12\right)\log\abs{\BD}}\\
 && =c \rho^{-d}(\log\abs{\BD})^d 
 \abs{\BD}^{\epsilon- C_d/12+\epsilon C_d/12} 
 \leq c\abs{\BD}^{\epsilon-\tilde{C}_d/2+\epsilon\tilde{C}_d} 
 \end{eqnarray*}
where we used Lemma \ref{lem:packing} part $a)$ in the second 
inequality and the goodness of $\BD$ (in order to estimate 
$\rho^{-d}(\log|\BD|)^d$) in the last
inequality. Furthermore, we claim that 
$\epsilon-\tilde{C}_d/2+\epsilon\tilde{C}_d \leq -\epsilon$. 
Indeed, this follows since 
\[
\frac{\tilde{C}_d}{2(2+\tilde{C}_d)}
=\frac{C_d}{2(24(1+C_d)+C_d)}
\geq \frac{C_d}{148}\geq\epsilon,
\]
by our assumption on $\epsilon.$
We conclude that
\begin{equation}\label{eq:boundI2}
I_2 \leq c \abs{\BD}^{-\epsilon}.
\end{equation}

\noindent
{\bf The sum $I_3$:}
We now turn to Equation \eqref{eq:I3} and the sum $I_3$. 
Here we have that $d(x,y)\geq \log |\BD| \geq \log 4^{2d}\geq 4$
by the goodness of $\BD.$ Therefore, we can apply 
Lemma \ref{lem:measurebounds_org} to see that 
\begin{eqnarray*}
\lefteqn{\mu_{d,1}(\L_{B(x,1)} \cup \L_{B(y,1)})}\\
& & =2\mu_{d,1}(\L_{B(x,1)})-\mu_{d,1}(\L_{B(x,1)} \cap \L_{B(y,1)})
\geq 2 - \frac{c_2}{\di{x}{y}^{d-1}}.
\end{eqnarray*}
Then, we see from \eqref{eq:I3} that
\begin{eqnarray*}
\lefteqn{I_3 
\leq \sum\limits_{\substack{x,y \in \BD \\ \log\abs{\BD}\leq \di{x}{y} \leq a }} \expo{-(1 -\epsilon) \log\abs{\BD}\left(2 - \frac{c_2 }{\di{x}{y}^{d-1}}\right)}} \\
&& = \abs{\BD}^{-2(1-\epsilon)}\sum\limits_{\substack{x,y \in \BD \\ 
\log\abs{\BD}\leq \di{x}{y}\leq a }}
\exp\left((1-\epsilon)\log\abs{\BD}c_2\di{x}{y}^{1-d}\right).
\end{eqnarray*}
Since $\log\abs{\BD}\leq \di{x}{y}$, we have that 
\begin{eqnarray*}
(1-\epsilon)\log\abs{\BD}c_2\di{x}{y}^{1-d} 
 \leq (1-\epsilon)c_2(\log\abs{\BD})^{2-d}\leq c_2.
\end{eqnarray*}
It is easy to prove that for any $x \leq c_2$ we must have that 
$e^x \leq 1+e^{c_2}x$, and so we get that 
\begin{eqnarray} \label{eq:intermediateI2}
\lefteqn{\abs{\BD}^{-2(1-\epsilon)}
\sum\limits_{\substack{x,y \in \BD \\ \log\abs{A}\leq \di{x}{y}\leq a}}
\expo{(1-\epsilon)\log\abs{\BD}c_2\di{x}{y}^{1-d}}}  \\
&& \leq \abs{\BD}^{-2(1-\epsilon)}\sum\limits_{\substack{x,y \in \BD \\ 
\log\abs{\BD}\leq \di{x}{y} \leq a}} 
\left( 1 + e^{c_2}(1-\epsilon)\log\abs{\BD}\di{x}{y}^{1-d}\right) \nonumber \\
&& \leq \min\left( \abs{\BD}^{2\epsilon}, c\rho^{-d}\abs{\BD}^{2\epsilon - 1}a^d \right) \nonumber \\
&& \  + c(1-\epsilon)\abs{\BD}^{-2(1-\epsilon)}\log\abs{\BD}\sum\limits_{\substack{x,y \in \BD    \\ \log\abs{\BD}\leq \di{x}{y} \leq a}} 
\di{x}{y}^{1-d}  \nonumber
\end{eqnarray}
where the minimum comes from summing $1$ and using part $a)$
of Lemma \ref{lem:packing} to bound the number of elements in the sum.

Next, we provide a bound to the last term on the right hand side of 
\eqref{eq:intermediateI2}. To that end, note that
\[
\sum\limits_{\substack{x,y \in \BD \\ \log\abs{\BD}\leq \di{x}{y}\leq a}} \di{x}{y}^{1-d} 
\leq \sum_{x\in \BD}\sum_{y\in \BD\setminus B(x,1)} \di{x}{y}^{1-d}
 \leq \sum_{x\in \BD} c \rho^{-d}\abs{\BD}^{1/d}
 \leq c \rho^{-d}\abs{\BD}^{1+1/d},
\]
where we used part $b)$ of Lemma \ref{lem:packing} in the second inequality. 
Furthermore, this and the fact that $\rho^{-d}(\log\abs{\BD})^d 
\leq \abs{\BD}^{\tilde{C}_d/2}$ by the goodness of $\BD$ implies that 
\begin{eqnarray} \label{eqn:I3tmp}
\lefteqn{ c(1-\epsilon)\abs{\BD}^{-2(1-\epsilon)}\log\abs{\BD}
\sum\limits_{\substack{x,y \in \BD \\ \log\abs{\BD}\leq \di{x}{y} \leq a}} 
\di{x}{y}^{1-d}}\\
& & \leq c(1-\epsilon)\rho^{-d}\log\abs{\BD}\abs{\BD}^{-2(1-\epsilon)+1+1/d} 
\nonumber \\
& & \leq c\rho^{-d}(\log\abs{\BD})^d\abs{\BD}^{-2(1-\epsilon)+1+1/d}
\nonumber\\
& & \leq c\abs{\BD}^{-2(1-\epsilon)+1+1/d+\tilde{C}_d/2}
\leq c\abs{\BD}^{-\epsilon}. \nonumber
\end{eqnarray}
where we in the last inequality use that 
$-2(1-\epsilon)+1+1/d +\tilde{C}_d/2 \leq -\epsilon$.
Indeed, this follows since
\[
1-\frac{1}{d}-\frac{\tilde{C}_d}{2}
=1-\frac{1}{d}-\frac{C_d}{24(1+C_d)}
\geq 1-\frac{1}{d}-\frac{2}{24}\geq 
\frac{5}{12}
\geq 3 \epsilon,
\]
by our assumption on $\epsilon.$
Combining \eqref{eqn:I3tmp} and \eqref{eq:intermediateI2} we see that 
\begin{equation}\label{eq:I3bounded}
I_3 < \min\left(\abs{\BD}^{2\epsilon}, c\rho^{-d}\abs{\BD}^{2\epsilon -1}a^d\right)+ c\abs{\BD}^{-\epsilon}. \\
\end{equation}

By summing the contributions from $I_1,I_2$ and $I_3$
(\eqref{eq:I1bounded}, \eqref{eq:boundI2} and \eqref{eq:I3bounded}), we then conclude that
\[
 I=\sum\limits_{\substack{x,y \in \BD \\ 0 < \di{x}{y} \leq a}} \p{x,y \in \BD_\epsilon} 
 < \min\left(\abs{\BD}^{2\epsilon}, c\rho^{-d}\abs{\BD}^{2\epsilon -1}a^d\right)+ c\abs{\BD}^{-\epsilon}.
 \]
Taking the limit $a \to \infty$, we obtain 
\[
\sum\limits_{\substack{x,y\in \BD\\ x \neq y}} 
  \p{x,y \in \BD_\epsilon} < \abs{\BD}^{2\epsilon}+c\abs{\BD}^{-\epsilon},
\]
and the first statement is proved.

Furthermore, taking $a = b \abs{\BD}^{1/2d}$, we have that 
\begin{eqnarray*}
\lefteqn{c\rho^{-d}\abs{\BD}^{2\epsilon -1}a^d
=c\rho^{-d}\abs{\BD}^{2\epsilon -1/2}b^d
= c b^d |\BD|^{-\epsilon}
\left(\rho^{-d} |\BD|^{3\epsilon-1/2}\right)}\\
& & 
\leq c b^d |\BD|^{-\epsilon}\left( |\BD|^{3 \epsilon -1/2 + \tilde{C}_d/2}\right)
\leq c b^d |\BD|^{-\epsilon},
\end{eqnarray*}
where we used that 
$\rho^{-d}\leq \rho^{-d}(\log\abs{\BD})^d \leq \abs{\BD}^{\tilde{C}_d/2}$
by the goodness of $\BD$ and that 
$\abs{\BD}^{3\epsilon -1/2 + \tilde{C}_d/2} \leq 1$ 
(which as above follows by our assumption 
that $\epsilon< 5/36$).
Thus 
\[
\sum\limits_{\substack{x,y \in \BD\\ 0 < \di{x}{y} < b \abs{\BD}^{1/2d}}} \p{x,y \in \BD_\epsilon}  <c  b^d  \abs{\BD}^{-\epsilon}
\]
and the proof is complete.
\fbox{}\\

The next proposition is another crucial step towards the proof of Theorem
\ref{thm:maindiscrete}. Consider first 
 \begin{eqnarray}\label{eqn:defGArhoeps}
  \lefteqn{G_{\BD,\epsilon} = \{K \subset \BD: \abs{\abs{K} -\abs{\BD}^\epsilon} \leq \abs{\BD}^{2\epsilon/3} \mbox{ and }  } \\
  && \di{x}{y} \geq \abs{\BD}^{1/2d} \mbox{ for all distinct } x,y \in K \}, \nonumber
 \end{eqnarray}
so that $G_{\BD,\epsilon}$ is a collection of subsets of $\BD$ that 
are well separated and close in cardinality to $\abs{\BD}^\epsilon$. 
Also note that the condition $\abs{\abs{K} -\abs{\BD}^\epsilon} 
\leq \abs{\BD}^{2\epsilon/3} $ implies 
that $G_{\BD,\epsilon}$ consists only of non-empty sets.

\begin{prop}\label{prop:Gbound}
Let $\BD$ be a good set.  Then, for every  $\epsilon$ such that 
$\epsilon < \min\left(\frac{\rho}{36},\frac{5}{36},\frac{C_d}{148}\right)$
and $|\BD|^\epsilon \geq 2$
we have that
\[
\p{\BD_\epsilon \notin G_{\BD,\epsilon}} 
 \leq c\abs{\BD}^{-\epsilon/3}.
 \]
 Here, the constant $c$ only depends on $d.$
\end{prop}
\noindent
{\bf Proof.}
 We will get the result by proving that
\begin{equation}\label{eq:Gbound1}
\p{\exists x,y \in \BD_\epsilon: 0 < \di{x}{y} < \abs{\BD}^{1/2d}}
\leq c\abs{\BD}^{-\epsilon},
\end{equation} 
and
 \begin{equation}
  \p{\abs{\abs{\BD_\epsilon} - \abs{\BD}^\epsilon} 
  > \abs{\BD}^{2\epsilon/3}} \leq c\abs{\BD}^{-\epsilon/3}.\label{eq:Gbound2} 
 \end{equation}
 For \eqref{eq:Gbound1} we have:
 \begin{eqnarray*}
 \lefteqn{\p{\exists x,y \in \BD_\epsilon : 0 < \di{x}{y} 
 < \abs{\BD}^{1/2d}}}\\
& & =\p{\bigcup_{\substack{x,y \in \BD \\ 0 
< \di{x}{y} < \abs{\BD}^{1/2d}}} \{x,y \in \BD_\epsilon\}} \\
 && \leq \sum_{\substack{x,y \in \BD \\ 0 < \di{x}{y} < \abs{\BD}^{1/2d}}}  \p{x,y \in \BD_\epsilon} 
 \leq c\abs{\BD}^{-\epsilon},
 \end{eqnarray*}
 where the last inequality come from part $b)$ of 
 Lemma \ref{lem:boundsumofpairs} with $b = 1$, and \eqref{eq:Gbound1} is done.

For \eqref{eq:Gbound2}, we observe that since $\BP(\cT(x)>t)=e^{-t}$,
\[
\BE(|\BD_\epsilon|)
 =\sum_{x \in \BD} \BP(\cT(x)> (1-\epsilon)\log |\BD|)
 =|\BD| \exp\left(-(1-\epsilon)\log |\BD|\right)
 =|\BD|^{\epsilon}.
\]
By Chebyshev's inequality,
\begin{equation}
\p{\abs{\abs{\BD_\epsilon} - \abs{\BD}^\epsilon} > \abs{\BD}^{2\epsilon/3}} 
\leq \frac{\E{\abs{\BD_\epsilon}^2} - \abs{\BD}^{2\epsilon}}{\abs{\BD}^{4\epsilon/3}}. \label{eq:Chebyshev}
\end{equation}
Now
\begin{eqnarray*}
 \lefteqn{\E{\abs{\BD\epsilon}^2} 
 = \sum_{x,y \in \BD}\p{x,y \in \BD_\epsilon}} \\
 && = \sum_{x \in \BD}\p{x \in \BD_\epsilon} 
 + \sum\limits_{\substack{x,y \in \BD \\ x\neq y}}\p{x,y \in \BD_\epsilon} 
 \leq \abs{\BD}^\epsilon + c\abs{\BD}^{-\epsilon} + \abs{\BD}^{2\epsilon},
\end{eqnarray*}
by part $a)$ of Lemma \ref{lem:boundsumofpairs}.
Plugging this into \eqref{eq:Chebyshev} yields
\begin{eqnarray*}
 \lefteqn{\p{\abs{\abs{\BD_\epsilon} - \abs{\BD}^\epsilon} > \abs{\BD}^{2\epsilon/3}}}\\
 &&\leq \frac{\abs{\BD}^\epsilon + c\abs{\BD}^{-\epsilon} + \abs{\BD}^{2\epsilon} -\abs{\BD}^{2\epsilon}}{\abs{\BD}^{4\epsilon/3}}
 \leq \abs{\BD}^{-\epsilon/3} +c\abs{\BD}^{-7\epsilon/3} 
 \leq c\abs{\BD}^{-\epsilon/3},
\end{eqnarray*}
and the proof is complete. 
\fbox{}\\

We shall make use of the following inequality which we state
here (without argument) for convenience. 
\begin{equation}\label{eqn:calc1}
-x-x^2 \leq \log(1-x) \leq -x \textrm{ for every } x \in [0,1/2],
\end{equation}

We need a simple auxiliary result before we can present our fluctuation
theorem.
\begin{lemma}\label{lem:logexp}
Let $\BD$ be a finite set, and let $\epsilon>0$ be such that 
$|\BD|^\epsilon\geq 4.$ We then have that for any 
$z \geq -\frac{\epsilon}{4} \log|\BD|$
\begin{align} \label{eq:positive}
\abs{\left(1- \frac{e^{-z}}{\abs{\BD}^\epsilon}\right)^{\abs{\BD}^\epsilon +\abs{\BD}^{2\epsilon/3}} - \expo{-e^{-z}}} \leq 3\abs{\BD}^{-\epsilon/12},
\end{align}
and similarly,
\begin{align} \label{eq:negative}
\abs{\left(1- \frac{e^{-z}}{\abs{\BD}^\epsilon}\right)^{\abs{\BD}^\epsilon -\abs{\BD}^{2\epsilon/3}} - \expo{-e^{-z}}} \leq \abs{\BD}^{-\epsilon/12}.
\end{align}
\end{lemma}
\noindent
{\bf Remark:}
As the proof of this lemma is a straightforward, albeit somewhat
tedious, exercise we shall only provide a sketch. 

\medskip
{\bf Sketch of proof.} We only address \eqref{eq:positive}, as 
\eqref{eq:negative} is proved in essentially the same way.

Let $a=e^{-z}$ and $b=|\BD|^\epsilon$, and note that 
$a/b\leq |\BD|^{-3\epsilon/4} \leq  1/2$
by the assumption on $z$ and since $|\BD|^\epsilon\geq 4.$ 
Using this and 
\eqref{eqn:calc1}, it is then elementary to prove that 
\begin{equation} \label{eqn:abest1}
\left(1 - \frac{a}{b}\right)^{b + b^{2/3}}\leq \expo{-a}.
\end{equation}
One can then continue to show that 
\begin{eqnarray*}
\lefteqn{\abs{\expo{-a} -\left(1 - \frac{a}{b}\right)^{b + b^{2/3}}}}\\
& & \leq \expo{-a}\left(1  -\expo{- \left(\frac{a^2}{b} 
+ \frac{a}{b^{1/3}} +\frac{a^2}{b^{4/3}}\right)}\right)
\leq \left(1  -\expo{-3b^{-1/12}}\right),
\end{eqnarray*}
where we use \eqref{eqn:abest1}, the lower bound in \eqref{eqn:calc1}
and the definitions of $a,b.$ Finally, it is easy to see that  
 \begin{align*}
\left(1  -\expo{-3b^{-1/12}}\right) 
\leq 3 b^{-1/12} 
\leq 3\abs{\BD}^{-\epsilon/12}. 
 \end{align*}
Thus, \eqref{eq:positive} is proved.
\fbox{}\\

We are now ready to prove a theorem which gives an explicit bound
on the difference between the (centred) cover time $\cT(\BD)$ of a 
good set $\BD,$ and the distribution function of a Gumbel distribution.

\begin{theo}\label{thm:fluct} 
For any good set $\BD$ with $\rho=\Sep(\BD)$ and any $\epsilon$
such that $|\BD|^\epsilon \geq 2^{16}$ and 
$\epsilon < \min\left(\frac{\rho}{36},\frac{1}{12},\frac{C_d}{148}\right)$
we have that 
\[
\sup_{z\in \bbR}\abs{\p{\cT(\BD) -\log\abs{\BD} \leq z} -\exp(-e^{-z})}
\leq c_3\abs{\BD}^{-\epsilon/12},
\]
where $c_3$ is a constant depending on $d$ only. 
\end{theo}
\noindent
{\bf Remarks:} Note that the Theorem is only useful if 
$\abs{\BD}^{-\epsilon/12}$ is small or goes to 0 for a sequence 
$(\BD_n)_{n\geq 1}$. Thus, the assumption that 
$|\BD|^\epsilon\geq 2^{16}$ (which can be somewhat relaxed with additional work)
will turn out not to be a limitation.


\noindent
{\bf Proof.}
We are going to split the proof into three 
cases and start with the easy ones.

\paragraph{Case 1:} Consider $z \leq -\frac{\epsilon}{4}\log\abs{\BD}$. 
Then
\begin{eqnarray*}
 \lefteqn{\p{\cT(\BD) \leq(\log\abs{\BD} + z)}
\leq \p{\cT(\BD)\leq  \left(\log\abs{\BD} - \frac{\epsilon}{4}\log\abs{\BD}\right)}}\\
 &&= \p{\left\{x \in \BD : \cT(x) > \left(1 - \frac{\epsilon}{4}
 \right)\log\abs{\BD}\right\} = \emptyset} 
 = \p{\BD_{\epsilon/4} = \emptyset}.
\end{eqnarray*}
We have that $\p{\BD_{\epsilon/4} = \emptyset} 
\leq \p{\BD_{\epsilon/4}\notin G_{\BD,\epsilon/4}}$, 
since $G_{\BD,\epsilon/4}$ only contains non-empty sets. 
Thus, by Proposition \ref{prop:Gbound} we have that,
\begin{equation}
\p{\cT(\BD) \leq  (\log\abs{\BD} + z)} 
\leq \p{\BD_{\epsilon/4} \notin G_{\BD,\epsilon/4}} 
\leq c\abs{\BD}^{-\epsilon/12}.
\end{equation}
Using that $z\leq  -\log\abs{\BD}^{\epsilon/4}$ we have 
that $\exp(-e^{-z}) \leq \expo{-\abs{\BD}^{\epsilon/4}}
\leq \abs{\BD}^{- \epsilon/4}$.
Therefore, 
\begin{eqnarray}
\lefteqn{\abs{\p{\cT(\BD) \leq (\log\abs{\BD} + z)} -\exp(-e^{-z})}}\\
& & \leq \p{\cT(\BD) \leq (\log\abs{\BD} + z)} + \exp(-e^{-z})\nonumber \\
 && \leq c\abs{\BD}^{-\epsilon/12} + \abs{\BD}^{-\epsilon/4} < c\abs{\BD}^{-\epsilon/12}, \nonumber
\end{eqnarray}
and this ends the first case.
\paragraph{Case 2:} Assume $z \geq \epsilon\log\abs{\BD}$. This gives 
\begin{eqnarray*}
 \lefteqn{\p{\cT(\BD) > (\log\abs{\BD} + z)} 
 \leq  \p{\cT(\BD) > (1 +\epsilon)\log\abs{\BD}}}\\
 &&= \p{\bigcup_{x \in \BD} 
 \left\{\cT(x)>(1 +\epsilon)\log\abs{\BD}\right\} } 
  \leq \abs{\BD}\p{\cT(o) > (1 +\epsilon)\log\abs{\BD}}\\
 & & = \abs{\BD}\exp(-(1 +\epsilon)\log\abs{\BD}) = \abs{\BD}^{-\epsilon}.
\end{eqnarray*}
Using that $z \geq  \epsilon\log\abs{\BD}$ we have that 
$\exp(-e^{-z}) \geq \exp(-\abs{\BD}^{-\epsilon}) \geq 1 - \abs{\BD}^{-\epsilon}$ 
by the inequality $e^x \geq 1 + x$, 
which holds for all $x$. This, and the above equation gives
\begin{eqnarray} \label{eq:case2bound}
 \lefteqn{\abs{\p{\cT(\BD) \leq(\log\abs{\BD} + z)} -\exp(-e^{-z})}}\\
 &&=\abs{1-\p{\cT(\BD) > (\log\abs{\BD} + z)} -\exp(-e^{-z})} \nonumber\\
 &&\leq \p{\cT(\BD) > (\log\abs{\BD} + z)} + \abs{1 - \exp(- e^{-z})} 
 \leq \abs{\BD}^{-\epsilon} + \abs{\BD}^{-\epsilon} , \nonumber
\end{eqnarray}
which proves the case $z \geq\epsilon\log\abs{\BD}$.

 \paragraph{Case 3:} Assume that 
$z \in (-\frac{\epsilon}{4}\log\abs{\BD},\epsilon\log\abs{\BD})$ 
and start by observing that
\begin{eqnarray} \label{eq:firstterm}
 \lefteqn{\abs{\p{\cT(\BD)\leq \log\abs{\BD} + z} - \expo{-e^{-z}}}} \\
& & \leq \abs{\p{\cT(\BD) \leq \log\abs{\BD} + z} - \p{\cT(\BD) 
\leq \log\abs{\BD} + z, \BD_\epsilon \in G_{\BD,\epsilon}}} \nonumber \\
& &  \ \ \ +  \abs{\exp(-e^{-z})\p{\BD_\epsilon \in G_{\BD,\epsilon}}-\exp(-e^{-z})} \nonumber \\
& & \ \ \ + \abs{\p{\cT(\BD) \leq \log\abs{\BD} + z, 
\BD_\epsilon \in G_{\BD,\epsilon}} -\exp(-e^{-z})
\p{\BD_\epsilon \in G_{\BD,\epsilon}}} . \nonumber
\end{eqnarray}
We will now consider the three terms on the right hand side.

To deal with the first term, note simply that 
 \begin{eqnarray*}
  \lefteqn{\abs{\p{\cT(\BD) \leq \log\abs{\BD} + z} - \p{\cT(\BD) 
  \leq \log\abs{\BD} + z, \BD_\epsilon \in G_{\BD,\epsilon}}}} \\
  && = \p{\cT(\BD) \leq \log\abs{\BD} + z,\BD_\epsilon \notin G_{\BD,\epsilon}}
  \leq \p{\BD_\epsilon \notin G_{\BD,\epsilon}}
  \leq c\abs{\BD}^{-\epsilon/3},
 \end{eqnarray*}
 by Proposition \ref{prop:Gbound}.
  
For the second term of the right hand side in \eqref{eq:firstterm}, note that
\[
\abs{\expo{-e^{-z}}\p{\BD_\epsilon \in G_{\BD,\epsilon}}-\expo{-e^{-z}}}
=\expo{-e^{-z}} \p{\BD_\epsilon \notin G_{\BD,\epsilon}} 
\leq c\abs{\BD}^{-\epsilon/3}
\]
again by Proposition \ref{prop:Gbound}.

We now turn to the third term of the right hand side in \eqref{eq:firstterm},  
and this is where all our previous efforts come together.
We will show that for any $\BK \in G_{\BD,\epsilon}$,
\begin{equation} \label{eqn:Tsrhoprel}
\abs{\BP\left(\cT(\BD)
\leq \log |\BD|+z |\ \BD_\epsilon = \BK\right) 
-\expo{-e^{-z}}}
 \leq c\abs{\BD}^{-\epsilon/12}.
\end{equation}
Then, multiplying by $\p{\BD_\epsilon = \BK}$ and summing over all 
$\BK \in G_{\BD,\epsilon}$ on both sides, we get
\[
 \abs{\p{\cT(\BD) \leq \log |\BD|+z, 
\BD_\epsilon \in G_{\BD,\epsilon}} 
-\expo{-e^{-z}}\p{\BD_\epsilon \in G_{\BD,\epsilon}}}
\leq c\abs{\BD}^{-\epsilon/12}.
\]
We can then conclude from \eqref{eq:firstterm}, that 
\begin{equation}\label{eq:case3bound}
\abs{\p{\cT(\BD) \leq (\log\abs{\BD} + z)} 
-\expo{-e^{-z}}} \leq c\abs{\BD}^{-\epsilon/12}
\end{equation}
for all $z \in (-\epsilon/4 \log\abs{\BD}, \epsilon\log\abs{\BD})$,
and the proof will be complete.

In order to prove \eqref{eqn:Tsrhoprel}, define 
\[
\omega^1 = \{ L\in \omega_{(1-\epsilon)\log\abs{\BD}} : 
\exists x \in \BD \text{ such that } x \in \cyn{L}\},
\]
so that $\omega^1$ is the subset of $\omega_{(1-\epsilon)\log |\BD|}$ 
consisting of the cylinders actually covering a point $x \in \BD.$ 
Similarly, let 
\[
\omega^2 = \{ L\in \omega_{(1-\epsilon)\log\abs{\BD}, 
\log\abs{\BD} + z} 
: \exists x \in \BD \text{ such that } x \in \cyn{L}\}
\] 
so that a cylinder 
$\cyn{L}\in \omega^2$ arrives between times 
$(1 -\epsilon)\log |\BD|$ and $(\log |\BD|+z)$, and in addition 
covers some $x\in \BD.$ Finally, let $\omega^3=\omega^1 \cup \omega^2.$
For any $\omega\in \Omega,$ let $\cC(\omega,\BD)$ be the 
set of points in $\BD$ that are covered by the 
cylinders in $\omega$.

Fix $\BK \in G_{\BD,\epsilon}$ and define the event 
\[
E_1 := \{\BD_\epsilon= \BK\} 
= \{ \BD \setminus \BK =  \cC(\omega^1,\BD)\}.
\]
The equality holds since $\BK \subset \BD$ is the uncovered set
iff $\BD\setminus \BK$ is the covered set.
Furthermore, $\{\cT(\BD) \leq \log |\BD|+z\}$ is the event that all 
the points of $\BD$ are covered by time 
$(\log |\BD|+z).$ Hence, by the definition of $\omega^3,$
$\{\cT(\BD) \leq \log |\BD|+z\}
= \{\BD = \cC(\omega^3,\BD)\}$ so that
$E_1 \cap \{\cT(\BD) \leq \log |\BD|+z\} 
= E_1 \cap \{\BK \subset \cC(\omega^2,\BD)\}$. 
Letting $E_2 = \{\BK \subset \cC(\omega^2,\BD)\}$ and
using that $\omega^1$ and $\omega^2$
are independent, we have that $ \p{E_1 \cap E_2} = \p{E_1}\p{E_2}$.
Furthermore, since $\omega^2$ has the same distribution as a 
Poisson line process with intensity 
$\epsilon\log\abs{\BD} + z $, we get that
\[
\BP(E_2)=\p{\BK \subset \cC(\omega^2,\BD)}
= \p{\cT(\BK) \leq \epsilon\log\abs{\BD} + z}.
\]
On the other hand, 
\begin{eqnarray*}
\lefteqn{\p{E_2}=\frac{\p{E_1\cap E_2}}{\p{E_1}} }\\
& & =\frac{\p{E_1 \cap \left\{\cT(\BD)
\leq \log |\BD|+z\right\}}}{\p{E_1}}
= \BP\left(\cT(\BD)\leq \log |\BD|+z\Big| E_1\right),
\end{eqnarray*}
so we conclude that 
\begin{equation} \label{eq:firstbound} 
\BP\left(\cT(\BD)\leq \log |\BD|+z
\Big| \BD_\epsilon = \BK \right)
= \p{\cT(\BK) \leq \epsilon \log |\BD|+z}.
\end{equation}
Therefore, using \eqref{eq:firstbound} we have
\begin{eqnarray} \label{eqn:Tsrhoprel2}
\lefteqn{\abs{\BP(\cT(\BD)\leq \epsilon\log\abs{\BD} + z\ |\ 
\BD_\epsilon = \BK)  - \expo{-e^{-z}}}} \\
&&= \abs{\p{\cT(\BK) \leq \epsilon\log\abs{\BD} + z} -\expo{-e^{-z}}} \nonumber \\
&&\leq \abs{\p{\cT(\BK) \leq \epsilon\log\abs{\BD}+z} -\p{\cT(o) 
\leq \epsilon\log\abs{\BD}+z}^{\abs{\BK}}}
\nonumber \\
&& \ \ \ \ + \abs{\p{\cT(o) \leq \epsilon\log\abs{\BD}+z}^{\abs{\BK}} -\expo{-e^{-z}}}. \nonumber 
\end{eqnarray}
We will deal with the two terms on the right hand side of 
\eqref{eqn:Tsrhoprel2} separately. 

For the first term, let $x,y\in \BK$ be distinct. By the definition of 
$G_{\BD,\epsilon}$ we have 
$\di{x}{y}\geq \abs{\BD}^{1/(2d)} 
= (\abs{\BD}^\epsilon)^{1/(2\epsilon d)}$. 
Furthermore, since 
$\abs{\abs{\BK} - \abs{\BD}^\epsilon} \leq \abs{\BD}^{2\epsilon/3}$, 
we have that  
\begin{equation}\label{eq:intermediate1}
 \abs{\BD}^\epsilon - \abs{\BD}^{2\epsilon/3} 
 \leq \abs{\BK} \leq \abs{\BD}^\epsilon + \abs{\BD}^{2\epsilon/3}.
\end{equation}
In particular, \eqref{eq:intermediate1} implies $\abs{\BK} 
\leq 2\abs{\BD}^\epsilon $, so that $\di{x}{y} 
\geq (\abs{\BD}^\epsilon)^{1/(2\epsilon d)} 
\geq (\abs{\BK}/2)^{1/(2\epsilon d)}$. 
If we let $m = \frac{1}{2\epsilon}\frac{d-1}{d} -2$, we conclude that
$\di{x}{y} \geq (\abs{\BK}/2)^\frac{2+m}{d-1}$. Moreover, by the goodness
of $\BD$ we have that $d(x,y)\geq \abs{\BD}^{1/(2d)} \geq 4 $ and so we can 
use Proposition \ref{prop:almostindependentcover} with 
$n = \abs{\BK}$  together with the fact that 
$z \leq \epsilon\log\abs{\BD}$ to get
\begin{equation} \label{eqn:TsrhoK1}
\abs{\p{\cT(\BK)\leq \epsilon\log\abs{\BD}+z} 
-\p{\cT(o) \leq \epsilon\log\abs{\BD}+z}^{\abs{\BK}}}
\leq c\epsilon \log\abs{\BD}(\abs{\BK}/2)^{-m}.
\end{equation}
Furthermore, since $|\BD|^{\epsilon/3}\geq 2^{16/3}\geq 2$ 
by assumption, we see that
\[
\frac{2}{|\BK|}\leq \frac{2}{\abs{\BD}^\epsilon -\abs{\BD}^{2\epsilon/3}}
\leq \frac{1}{\abs{\BD}^{\epsilon/3}},
\]
where we use \eqref{eq:intermediate1} in the first inequality and the 
fact that $x^3-x^2 \geq 2x$ if $x\geq 2$ in the second.
Furthermore, since also $\epsilon < \frac{1}{12}\leq \frac{d-1}{6d}$ 
by assumption, we get that $m >1.$ We conclude that 
$(\abs{\BK}/2)^{-m}\leq \abs{\BD}^{-m\epsilon/3}\leq \abs{\BD}^{-\epsilon/3}$.
Thus,
\begin{equation} \label{eqn:TsrhoK2}
c\epsilon \log\abs{\BD}(\abs{\BK}/2)^{-m}
\leq c\epsilon(\log\abs{\BD})\abs{\BD}^{-\epsilon/3}.
\end{equation}
Using that $\log x  \leq x^{1/4}$ for any $x\geq 2^{16}$ and using the 
assumption that $|\BD|^\epsilon\geq 2^{16}$, we see that 
$\epsilon\log\abs{\BD} \leq \abs{\BD}^{\epsilon/4}$ so that 
$c\epsilon(\log\abs{\BD})\abs{\BD}^{-\epsilon/3} 
\leq c\abs{\BD}^{\epsilon/4}\abs{\BD}^{-\epsilon/3} 
= c\abs{\BD}^{-\epsilon/12}.$ 
We conclude from \eqref{eqn:TsrhoK1}
and \eqref{eqn:TsrhoK2} that 
\begin{equation} \label{eq:secondbound}
\abs{\p{\cT(\BK) \leq \epsilon\log\abs{\BD}+z } 
-\p{\cT(o) \leq \epsilon\log\abs{\BD}+z}^{\abs{K}}}
 \leq c\abs{\BD}^{-\epsilon/12}. 
\end{equation}

We can now turn to the second term of \eqref{eqn:Tsrhoprel2}.
We have that 
\[
\p{\cT(o) \leq \epsilon\log\abs{\BD}+z}^{\abs{\BK}} 
 = \left(1 - \exp(-\epsilon\log\abs{\BD} -z)\right)^{\abs{\BK}} = \left(1 - \frac{e^{-z}}{\abs{\BD}^\epsilon}\right)^{\abs{\BK}}.
\]
Then by \eqref{eq:intermediate1}, 
\begin{equation} \label{eq:thirdbound}
\left(1- \frac{e^{-z}}{\abs{\BD}^\epsilon}\right)^{\abs{\BD}^\epsilon +\abs{\BD}^{2\epsilon/3}}
\leq \p{\cT(o) \leq \epsilon\log\abs{\BD}+z}^{\abs{\BK}} 
\leq \left(1-\frac{e^{-z}}{\abs{\BD}^\epsilon}\right)^{\abs{\BD}^\epsilon - \abs{\BD}^{2\epsilon/3}}.
\end{equation}
Then, since $|\BD|^\epsilon\geq 2^{16}\geq 4$ we can use 
Lemma \ref{lem:logexp} to get that
(with the obvious meaning of $\pm$)
\begin{eqnarray*}
\lefteqn{\abs{\p{\cT(o) \leq \epsilon\log\abs{\BD}+z}^{\abs{\BK}} - \expo{-e^{-z}}}}\\
&& \leq \abs{\left(1- \frac{e^{-z}}{\abs{\BD}^\epsilon}\right)^{\abs{\BD}^\epsilon \pm\abs{\BD}^{2\epsilon/3}} - \expo{-e^{-z}}} \leq c\abs{\BD}^{-\epsilon/12}.
\end{eqnarray*}
Combining this with \eqref{eqn:Tsrhoprel2}, \eqref{eq:secondbound} and \eqref{eq:thirdbound}, 
the inequality \eqref{eqn:Tsrhoprel} is proved. This completes the proof.
\fbox{}\\

The proof of Theorem \ref{thm:maindiscrete} is now easy.

\noindent
{\bf Proof of Theorem \ref{thm:maindiscrete}}.
Firstly, for any set $\BD_n$ in our sequence, let $\rho_n=\Sep(\BD_n).$
Assume first that $\rho_n \to 0.$
According to Lemma \ref{lemma:goodset}, there exists some 
$N<\infty$ such that $\BD_n$ is good for every $n \geq N.$ 
Let 
\[
\epsilon_n=\frac{\rho_n}{50}
\]
and note that if $\BD_n$ is good, we have that 
\[
|\BD_n|^{\epsilon_n}=|\BD_n|^{\rho_n/50}\geq 2^{16}
\]
and so by perhaps increasing $N$ further, $\BD_n,\epsilon_n$ 
satisfies the assumptions of Theorem \ref{thm:fluct} 
for $n \geq N.$ Thus, for every $n\geq N,$ we have that 
\[
\sup_{z\in \bbR}\abs{\p{\cT(\BD_n) -\log\abs{\BD_n} \leq z} -\exp(-e^{-z})}
\leq c_3\abs{\BD_n}^{-\epsilon_n/12}=c_3\abs{\BD_n}^{-\rho_n/600}.
\]
The statement follows by letting $n \to \infty$ since
$\liminf_n \Sep(\BD_n) \log |\BD_n| =\infty$ by assumption.

If instead $\rho_n \not \to 0$ 
we can simply let 
$\epsilon_n=\min\left(\frac{\rho_n}{50},
\frac{1}{12},\frac{C_d}{148}\right)$ and again the statement follows 
in a similar way.
\fbox{}\\

\section{Proof of Theorem \ref{thm:main}} \label{sec:mainproof}

In order to prove Theorem \ref{thm:main}, we will study two quantities 
that are closely related to $\cT(A).$ Firstly, recall the definition of 
$\BA^\rho$ from Section \ref{sec:packing}. We will think of $\cT(A)$
and $\cT(\BA^\rho)$ as being generated by the same cylinder process, and
sometimes we will write $\cT(\BA^\rho,(\omega_t)_{t\geq 0})$ for emphasis.
We have the following easy 
proposition which we will not use, but we include for completeness
and motivation.
\begin{prop} \label{prop:TdTrealTw}
We have that for any $\rho>0,$
\[
\cT(\BA^\rho) \leq \limsup_{\delta \to 0} \cT(\BA^\delta) =\cT(A),
\]
where the inequality and equality holds for a.e. $(\omega_t)_{t\geq 0}.$
\end{prop}
{\bf Proof.} 
Obviously, $\cT(\BA^\rho,(\omega_t)_{t\geq 0})\leq \cT(A,(\omega_t)_{t\geq 0})$ 
for every $\rho>0.$ Assume that $\cT(A,(\omega)_{t\geq 0})>\tau$ and let 
\[
\cC_\tau:=\bigcup_{(L,s)\in \Psi:s \leq \tau} \cc(L).
\]
Then, there exists a point $x\in A\setminus \cC_\tau$
and some $\delta>0$ such that 
$B(x,\delta) \subset \BR^d \setminus \cC_\tau.$ This 
follows since a.s. $\cC_\tau$ is a closed set. Therefore we must 
have that $\cT(\BA^{\delta/2})>\tau.$ We conclude that 
$\limsup_{\delta \to 0} \cT(\BA^\delta)\geq \cT(A).$
\fbox{}\\

Of course, Proposition \ref{prop:TdTrealTw} tells us that in order to get 
a good approximation of $\cT(A)$ from below, one should estimate by 
$\cT(\BA^\rho)$ and take $\rho$ as small as possible. 
However, by taking $\rho$ too small, the estimates that we will obtain for 
$\cT(\BA^\rho)$ from Theorem \ref{thm:fluct} 
will become useless. Therefore, it will be important to pick $\rho$ in 
an optimal way. Later, we shall see that the ''sweet-spot'' is provided 
by letting $\rho$ be of order $(\log |\BA^1|)^{-1}.$

In order to define our second quantity, we start by saying that 
a ball $B(x,\rho)$ is 
{\em singularly covered} by time $t$ if 
there exists $L \in \omega_t$ such that $B(x,\rho) \subset \cyn{L}$. 
Alternatively, the point $x$ is $\rho$-\textit{singularly covered}
by time $t$ if the corresponding ball $B(x,\rho)$ is singularly covered.
Thus,
\[
\cT_s^\rho(x) 
:= \inf\{t >0 : \exists L \in \omega_t \mbox{ such that } B(x,\rho) \subset \cyn{L}\}
\]
is then the time at which the point $x$ is $\rho$-singularly covered.
Finally, we define the ($\rho$-) {\em well cover time} 
\begin{equation} \label{eqn:defTw}
\cT_w^\rho(A)
:=\inf\left\{t > 0: \cT_s^\rho(x)\leq t \
\forall x\in \BA^\rho \right\}.
\end{equation}
Again, $\cT_w^\rho(A)$ is generated by using the same cylinder process
used to generate $\cT(A)$ and $\cT(\BA^\rho)$.
It is an easy consequence of the above definitions that 
\begin{equation} \label{eqn:TdTTwcoupl}
\cT(\BA^\rho,(\omega_t)_{t \geq 0}) 
\leq \cT(A,(\omega_t)_{t \geq 0}) \leq \cT_w^\rho(A,(\omega_t)_{t \geq 0}),
\end{equation}
and these inequalities will provide the bridge
between the fluctuation result for discrete sets, i.e. Theorem \ref{thm:fluct}
and Theorem \ref{thm:main}. For the lower bound this is obvious, while for 
the upper we shall have use of the following proposition. 

\begin{prop} \label{prop:welldisc_scalingrel}
We have that 
\[
(1-\rho)^{d-1}\cT_w^\rho(A)
\stackrel{d}{=}
\cT(\Delta^{\rho/(1-\rho)}\left((1-\rho)^{-1}A\right)),
\]
where $\stackrel{d}{=}$ denotes equality in distribution.
\end{prop}
\noindent
{\bf Proof.}
Firstly, observe that the $\rho$-well cover time of $A$ equals the cover 
time of $\BA^\rho$, provided that the cylinders in our cylinder 
process were of radius $1-\rho$ instead of 1. Let 
$\cT^{1-\rho}(\BA^\rho)$ denote this cover time so that 
\[
\cT^{1-\rho}(\BA^\rho)=\cT_w^\rho(A).
\]

Secondly, we can scale space by a factor of $(1-\rho)^{-1}$ in order to 
re-obtain a cylinder process of radius 1. However, 
this scaling results in a cylinder process of rate $(1-\rho)^{d-1}$ 
(rather than 1). Indeed, start by recalling the notation from Section 
\ref{subsec:PCmodel} and the meaning of $\Delta^\rho$ from Section 
\ref{sec:packing}. As in the proof of Proposition \ref{prop:smallbeta},
observe that for any fixed $L\in G(d,1)$ the set of $y\in L^{\bot}$ 
such that $L+y\in \L_K$ is $\Pi_{L^\bot}(K).$
Furthermore, we see that 
\[
\int_{L^\bot} \mathds{1}(L+y\in \L_K)\lambda_{d-1}(dy)
=\lambda_{d-1}\left(\Pi_{L^{\bot}}(K)\right),
\]
for any compact $K \subset \BR^d.$
Thus, by equation \eqref{eqn:repform1},
\begin{eqnarray*}
\lefteqn{\md{\L_{K/(1-\rho)}}
=\frac{1}{\kappa_{d-1}}\int_{G(d,1)} 
\lambda_{d-1}\left(\Pi_{L^{\bot}}(K/(1-\rho))\right)
\nu_{d,1}(dL) }\\
& & =\frac{1}{\kappa_{d-1}(1-\rho)^{d-1}}\int_{G(d,1)} 
\lambda_{d-1}\left(\Pi_{L^{\bot}}(K)\right)
\nu_{d,1}(dL)=\frac{\md{\L_{K}}}{(1-\rho)^{d-1}},
\end{eqnarray*}
from which the scaling claim follows. Of course, after this scaling is 
performed, the set $\BA^\rho$ is now $\rho/(1-\rho)$-separated and
we obtain that (see \eqref{eqn:scaleinv})
\[
(1-\rho)^{-1}\BA^\rho=\Delta^{\rho/(1-\rho)}\left((1-\rho)^{-1}A\right).
\]
Therefore, the cover time $\cT^{1-\rho}(\BA^\rho)$ equals the cover time
of $\Delta^{\rho/(1-\rho)}\left((1-\rho)^{-1}A\right)$ when using a Poisson 
cylinder process with radius 1 and intensity $(1-\rho)^{d-1}.$ Thus,
\[
(1-\rho)^{d-1}\cT_w^\rho(A)=(1-\rho)^{d-1}\cT^{1-\rho}(\BA^\rho)
\stackrel{d}{=}
\cT(\Delta^{\rho/(1-\rho)}\left((1-\rho)^{-1}A\right)),
\]
as desired.
\fbox{}\\

\noindent
{\bf Remark:} Intuitively, when $\rho$ is small there will be little difference
between $\rho$ and $\rho/(1-\rho)$ and so 
\[
(1-\rho)^{-(d-1)}\cT(\Delta^{\rho/(1-\rho)}\left((1-\rho)^{-1}A\right))
\approx \cT(\BA^\rho).
\]
This means that we should be able to ''almost'' match the upper 
and lower bound in \eqref{eqn:TdTTwcoupl}. Of course, this is indeed 
the strategy of the proof of Theorem \ref{thm:main}.

\medskip

Before we proceed, we observe that it is not hard to prove that 
(see \cite{Falconer} p.30-31) for any $A$ and $\rho>0,$
\begin{equation} \label{eqn:NArhorel}
N_{2\rho}(A)\leq |\BA^\rho| \leq N_{\rho/2}(A).
\end{equation}
As a consequence, $N_\rho(A)$ can be replaced by $|\BA^\rho|$
in all the definitions of Section \ref{sec:dim} as well as in 
\eqref{eqn:dimliminfsup}.

We are now ready to prove the main theorem of the paper. \\

\noindent
{\bf Proof of Theorem \ref{thm:main}.}
We need to show that for any $\epsilon>0,$ there exists a constant $C_\epsilon$
independent of $n,$ such that for any set $A$ satisfying \eqref{eqn:dimliminfsup}
we have that 
\[
\BP(|\cT(nA)-\dim_B(A)(\log n+\log \log n)|\geq C_\epsilon)
\leq \epsilon,
\]
for every $n$.

Fix $\epsilon>0.$
For brevity, we define $A_n:=nA$ so that we can write $\BA_n^\rho$
in place of $\Delta^\rho(nA).$ 
Our first step is to let 
\[
\rho_n:=\frac{D}{\log |\BA_n^1|},
\]
where $800 \log 2 \leq D<\infty$, and observe that by Lemma \ref{lem:sizeofA}
part $a)$ we have that $\rho_n\geq 800 \log 2 (\log |\BA_n^{\rho_n}|)^{-1}$.
Then, according to Lemma \ref{lemma:goodset}, the set $\BA_n^{\rho_n}$ 
is good whenever $|\BA_n^1|$ (and therefore also $|\BA_n^{\rho_n}|$)
is large enough. From now one we will simply assume that $n$ is so large that 
$\BA_n^{\rho_n}$ is a good set. For reasons that will transpire, we shall
further assume that $D$ is such that $c_3 e^{-D/600}\leq \epsilon/4$,
and (again for reasons that will become clear) 
we choose $C_\epsilon\geq 12d D$ such that $e^{-C_\epsilon/8}\leq \epsilon/4$
and $\exp(-e^{C_\epsilon/8})\leq \epsilon/4.$ 

Assume first that $A$ satisfies the assumption that  
for some $0<c_A<\infty,$
\begin{equation} \label{eqn:dimlim}
\lim_{\rho \to 0} \rho^{\dim_B(A)} |\BA^\rho|=c_A.
\end{equation}
This assumption is clearly stronger than \eqref{eqn:dimliminfsup}, and the 
reason for this stronger assumption is to illustrate how the constant 
$c_A$ comes in to play (see also the remark after this proof).
We will also let $\tilde{c}_A:=\dim_B(A)^{\dim_B(A)} c_A$.

It follows from \eqref{eqn:scaleinv} that 
for every $n\geq 1$, $\abs{\BA_n^{\rho_n}}=\abs{n\BA^{\rho_n/n}}
=\abs{\BA^{\rho_n/n}}$ since enlarging a set does not change its cardinality. 
Therefore,
\[
\lim_{n \to \infty}\frac{\log |\BA_n^1|}{\log n}
=\lim_{n \to \infty}\frac{\log |\BA^{1/n}|}{\log n}
=\dim_B(A),
\]
by the definition of $\dim_B(A)$ (see Section \ref{sec:dim}).
We then get that 
\begin{eqnarray*} 
\lefteqn{\left(\frac{D}{n\log n}\right)^{\dim_B(A)}|\BA_n^{\rho_n}|
=\left(\frac{\log |\BA_n^1|}{\log n}\right)^{\dim_B(A)}
\left(\frac{D}{n\log |\BA_n^1|}\right)^{\dim_B(A)}|\BA_n^{\rho_n}| }\\
& & =\left(\frac{\log |\BA_n^1|}{\log n}\right)^{\dim_B(A)}
\left(\frac{\rho_n}{n}\right)^{\dim_B(A)}|\BA^{\rho_n/n}|
\to \dim_B(A)^{\dim_B(A)} c_A=\tilde{c}_A, 
\end{eqnarray*}
by \eqref{eqn:dimlim} since $\rho_n/n \to 0.$ Thus, 
\begin{equation}\label{eqn:anrhonlim}
\frac{1}{\tilde{c}_A}
\left(\frac{D}{n\log n}\right)^{\dim_B(A)}|\BA_n^{\rho_n}| \to 1.
\end{equation}
Note further that $|\log D^{\dim_B(A)}|\leq |d\log D| 
\leq d D\leq C_\epsilon/12$ by one of our assumptions on $C_\epsilon.$
Thus, by \eqref{eqn:anrhonlim} we have that 
\[
|\log D^{\dim_B(A)}|+
\left|\log \left(\frac{1}{\tilde{c}_A}
\left(\frac{D}{n\log n}\right)^{\dim_B(A)}|\BA_n^{\rho_n}|
\right)\right| \leq C_\epsilon/2,
\]
for every $n$ larger than some $N(D,C_\epsilon)$. 
Thus, for $n \geq N(D,C_\epsilon)$ we get
\begin{eqnarray}\label{eqn:Trealest1}
\lefteqn{\BP(|\cT(A_n)
-\dim_B(A)(\log n+\log \log n)-\log \tilde{c}_A )| \geq C_\epsilon)}\\
& & =\BP\Big(\Big|\cT(A_n)-\log |\BA_n^{\rho_n}|-\log D^{\dim_B(A)}
\nonumber\\
& & \ \ \ \ +\log \left(\frac{1}{\tilde{c}_A}
\left(\frac{D}{n\log n}\right)^{\dim_B(A)}|\BA_n^{\rho_n}|
\right)\Bigg|\geq C_\epsilon\Bigg)\nonumber\\
& & \leq \BP\left(\left|\cT(A_n)-\log |\BA_n^{\rho_n}|\right|
\geq C_\epsilon/2\right) \nonumber\\
& & \leq \BP\left(\cT(\BA_n^{\rho_n})-\log |\BA_n^{\rho_n}| \leq 
-C_\epsilon/2\right)
+\BP\left(\cT_w^{\rho_n}(A_n)-\log |\BA_n^{\rho_n}|
\geq C_\epsilon/2\right), \nonumber
\end{eqnarray}
where the last inequality follows since 
$\cT(\BA_n^{\rho_n}) \leq \cT(A_n) \leq \cT_w^{\rho_n}(A_n)$.

We shall now address the two probabilities of the right hand side of 
\eqref{eqn:Trealest1} separately.
First, we can apply Theorem \ref{thm:fluct} with $\epsilon_n=\rho_n/50$ 
to conclude that
\[
|\BP\left(\cT(\BA_n^{\rho_n})-\log |\BA_n^{\rho_n}|
\leq -C_\epsilon/2\right)-\exp(-e^{C_\epsilon/2})| 
\leq c_3 |\BA_n^{\rho_n}|^{-\rho_n/600}
\]
for every $n\geq N(D,C_\epsilon)$ (by perhaps making $N(D,C_\epsilon)$
even larger than before). Then, by using Lemma \ref{lem:sizeofA}
part $a)$ we see that 
\begin{eqnarray} \label{eqn:Tdest1}
\lefteqn{\BP\left(\cT(\BA_n^{\rho_n})-\log |\BA_n^{\rho_n}|
\leq -C_\epsilon/2\right)
\leq \exp(-e^{C_\epsilon/2})+c_3 |\BA_n^{\rho_n}|^{-\rho_n/600}}\\
& & \leq \exp(-e^{C_\epsilon/2})+c_3 |\BA_n^1|^{-\rho_n/600}
=\exp(-e^{C_\epsilon/2})+c_3 e^{-D/600}\leq \epsilon/2, \nonumber
\end{eqnarray}
because of our choices of $D$ and $C_\epsilon.$

We now turn to the second term of the right hand side of 
\eqref{eqn:Trealest1}. By Proposition \ref{prop:welldisc_scalingrel} we
have that $(1-\rho_n)^{d-1}\cT_w^{\rho_n}(A_n)=
\cT(\Delta^{\rho_n/(1-\rho_n)}((1-\rho_n)^{-1}A_n))$.
Furthermore, as above it follows from \eqref{eqn:scaleinv} that 
$|\Delta^{\rho_n/(1-\rho_n)}((1-\rho_n)^{-1}A_n)|
=|(1-\rho_n)^{-1}\BA_n^{\rho_n}|=|\BA_n^{\rho_n}|.$ 
We can therefore use Theorem \ref{thm:fluct} to conclude that 
\begin{equation} \label{eqn:wellfluct}
\left|\BP\left((1-\rho_n)^{d-1}\cT_w^{\rho_n}(A_n)-\log |\BA_n^{\rho_n}|
\leq z \right)
-\exp\left(-e^{-z}\right)\right| \leq c_3  |\BA_n^{\rho_n}|^{-\rho_n/600}
\end{equation}
for every $z\in \BR^d.$
Indeed, since $\rho_n/(1-\rho_n)>\rho_n \geq D/\log |\BA_n^{\rho_n}|,$ the
set $\Delta^{\rho_n/(1-\rho_n)}((1-\rho_n)^{-1}A_n)$ is good and Theorem 
\ref{thm:fluct} can be applied for every $n\geq N(D,C_\epsilon).$

By yet again picking $N(D,C_\epsilon)$ perhaps even larger than before, 
we have that 
$\rho_n=D/\log|\BA_n^1|\leq 1-2^{-1/d}$ for $n \geq N(D,C_\epsilon)$. 
We can then use
the inequality $(1-x)^{-d+1}\leq 1+2dx$ which holds for 
$0<x\leq 1-2^{-1/d}$ to conclude that for such $n,$
$(1-\rho_n)^{-(d-1)}\leq 2d \rho_n.$
Therefore, by Lemma \ref{lem:sizeofA} part $c)$ we get that 
\begin{eqnarray*}
\lefteqn{\log |\BA_n^{\rho_n}|\left((1-\rho_n)^{-(d-1)}-1\right)
\leq \log (6^d \rho_n^{-d} |\BA_n^1|)2d\rho_n}\\
& &= (d\log 6+\log |\BA_n^1|+d\log \log |\BA_n^1|-d\log \log D)
2d \frac{D}{\log |\BA_n^1|}.
\end{eqnarray*}
Clearly, there exists an $N(D,C_\epsilon)$ perhaps even larger than before, 
such that for every $n\geq N(D,C_\epsilon),$
\[
\log |\BA_n^{\rho_n}|\left((1-\rho_n)^{-(d-1)}-1\right)\leq 3dD
\leq C_\epsilon/4,
\]
where we use the fact that $C_\epsilon \geq 12dD$ by assumption.

Hence, for $n\geq N(D,C_\epsilon),$ 
\begin{eqnarray*} 
\lefteqn{\BP\left(T_w^{\rho_n}(A_n)-\log |\BA_n^{\rho_n}|
\geq C_\epsilon/2\right)}\\
& & = \BP\left(T_w^{\rho_n}(A_n)
-\frac{1}{(1-\rho_n)^{d-1}}\log |\BA_n^{\rho_n}|
+\frac{1}{(1-\rho_n)^{d-1}}\log |\BA_n^{\rho_n}|
-\log |\BA_n^{\rho_n}|\geq C_\epsilon/2\right)\\
&& \leq \BP\left(T_w^{\rho_n}(A_n)
-\frac{1}{(1-\rho_n)^{d-1}}\log |\BA_n^{\rho_n}|
\geq C_\epsilon/4\right) \\
& & = \BP\left((1-\rho_n)^{d-1}T_w^{\rho_n}(A_n)-\log |\BA_n^{\rho_n}|
\geq (1-\rho_n)^{d-1}C_\epsilon/4\right)\\
& & \leq \BP\left((1-\rho_n)^{d-1}T_w^{\rho_n}(A_n)-\log |\BA_n^{\rho_n}|
\geq C_\epsilon/8\right) 
\end{eqnarray*}
where the last inequality holds for every $n$ such that 
$(1-\rho_n)^{d-1} \geq 1/2.$ This clearly holds for every 
$n \geq N(D,C_\epsilon)$ where $N(D,C_\epsilon)$ 
might be even larger than before.

We can now use \eqref{eqn:wellfluct} to see that 
\begin{eqnarray} \label{eqn:Twest1}
\lefteqn{\BP\left(T_w^{\rho_n}(A_n)-\log |\BA_n^{\rho_n}|
\geq C_\epsilon/2\right)}\\ \nonumber
& & \leq \BP\left((1-\rho_n)^{d-1}T_w^{\rho_n}(A_n)-\log |\BA_n^{\rho_n}|
\geq C_\epsilon/8\right) \\ \nonumber
&& =1-\BP\left((1-\rho_n)^{d-1}T_w^{\rho_n}(A_n)-\log |\BA_n^{\rho_n}|
\leq C_\epsilon/8\right) \\ \nonumber
& & \leq 1-\exp(-e^{-C_\epsilon/8})+c_3|\BA_n^{\rho_n}|^{-\rho_n/600}
\leq e^{-C_\epsilon/8}+c_3 e^{-D/600}\leq \epsilon/2, \nonumber
\end{eqnarray} 
much as when we dealt with the first term of the right hand side of 
\eqref{eqn:Trealest1}.

Finally, combining \eqref{eqn:Trealest1}, \eqref{eqn:Tdest1} 
and \eqref{eqn:Twest1} 
we conclude that 
\begin{equation} \label{eqn:finalineq}
\BP(|\cT(A_n)-\dim_B(A)(\log n+\log \log n)-\log \tilde{c}_A )|
\geq C_\epsilon) \leq \epsilon
\end{equation}
for every $n \geq N(D,C_\epsilon)$. However it is now easy to see 
that \eqref{eqn:finalineq} must in fact hold for every $n\geq 1$ by 
(perhaps) increasing $C_\epsilon$ even further.

The full statement (i.e. assuming \eqref{eqn:dimliminfsup} instead of 
\eqref{eqn:dimlim}) is proved in a very similar way, and therefore
we will only indicate the changes. Again we get that 
$\lim_{n \to \infty}\frac{\log |\BA_n^1|}{\log n} =\dim_B(A)$ and
the first change is that
\eqref{eqn:anrhonlim} is replaced by 
\[
0<\liminf_{n \to \infty} \left(\frac{D}{n\log n}\right)^{\dim_B(A)}|\BA_n^{\rho_n}| 
\leq \limsup_{n \to \infty} \left(\frac{D}{n\log n}\right)^{\dim_B(A)}|\BA_n^{\rho_n}|
<\infty.
\]
Then, one can pick $C_\epsilon$ perhaps even larger so that  
\[
|\log D^{\dim_B(A)} |
+\limsup_{n \to \infty} \left|\log\left(\left(\frac{D}{n\log n}\right)^{\dim_B(A)}|\BA_n^{\rho_n}|\right)\right| \leq C_\epsilon/3.
\]
This can then be inserted into a slightly modified version of 
\eqref{eqn:Trealest1} in order to obtain the statement
\begin{eqnarray*}
\lefteqn{\BP(|\cT(A_n)
-\dim_B(A)(\log n+\log \log n)| \geq C_\epsilon)}\\
& & \leq \BP\left(\cT(\BA_n^{\rho_n})-\log |\BA_n^{\rho_n}|
\leq -C_\epsilon/2\right) \\
& & \ \ \ \ +\BP\left((1-\rho_n)^{d-1}\cT_w^{\rho_n}(A_n)-\log |\BA_n^{\rho_n}|
\geq C_\epsilon/2\right),
\end{eqnarray*}
for every $n$ large enough, and then we can proceed as above.
\fbox{}\\

\noindent
{\bf Remark:} Recall the remark after the statement of Theorem \ref{thm:main}
where it is speculated that under strong enough regularity conditions
$(\cT(nA)-\dim_B(A)(\log n+\log \log n) +C)_{n \geq 1}$ might converge 
to a Gumbel distribution for some constant $C.$ Then, 
\eqref{eqn:finalineq} indicates that 
the constant $C$ might be the same as 
$-\log \tilde{c}_A=-\dim_B(A)\log \dim_B(A)-\log c_A$. This is our reason 
for first proving Theorem \ref{thm:main} under the stronger assumption 
\eqref{eqn:dimlim}.

\section{Proof of Theorem \ref{thm:aux}} \label{sec:secondaryproof}
In this section we shall prove our secondary result, i.e. Theorem 
\ref{thm:aux}. 

\noindent
{\bf Proof of Theorem \ref{thm:aux}.}
The proof is similar to the proof of Theorem \ref{thm:main}
and so we shall be brief. One of the main differences is that here we let
\[
\rho_n:=\frac{\log \log n}{\log |\BA_n^1|},
\]
where $A_n=nA$ as before.
We remark that the choice of $\log \log n$ in the numerator is somewhat
arbitrary. Indeed, any function that goes to infinity sufficiently slow
as $n \to \infty$ would do. The purpose is to make sure that 
$|\BA_n^{\rho_n}|^{-\rho_n/600}$ vanishes in the limit and $\BA_n^{\rho_n}$
is good.

Observe also that by \eqref{eqn:scaleinv},
\[
\liminf_{n \to \infty} \frac{\log |\BA_n^1|}{\log n}
=\liminf_{n \to \infty} \frac{\log |\BA^{1/n}|}{-\log (1/n)}
\geq \liminf_{\rho \to 0} \frac{\log |\BA^\rho|}{-\log \rho}
=\underline{\dim}_B(A).
\]
Letting $\delta:=(\underline{\dim}_B(A)-\underline{\alpha})/2$ 
we then get that 
\begin{eqnarray*} 
\lefteqn{\left(\frac{\log \log n}{n\log n}\right)^{\underline{\dim}_B(A)-\delta}|\BA_n^{\rho_n}|}\\
& & =\left(\frac{\log |\BA_n^1|}{\log n}\right)^{\underline{\dim}_B(A)-\delta}
\left(\frac{\log \log n}{n\log |\BA_n^1|}\right)^{\underline{\dim}_B(A)-\delta}
|\BA_n^{\rho_n}| 
\nonumber \\
& & =\left(\frac{\log |\BA_n^1|}{\log n}\right)^{\underline{\dim}_B(A)-\delta}
\left(\frac{\rho_n}{n}\right)^{\underline{\dim}_B(A)-\delta}|\BA^{\rho_n/n}| 
\to \infty,
\end{eqnarray*}
by the definition of $\underline{\dim}_B(A)$ (see Section \ref{sec:dim}).

We then see that for $n$ larger than some $N$,
\begin{eqnarray*}
\lefteqn{\BP(\cT(A_n)
-\underline{\alpha}\log n \leq z)}\\
& & =\BP\Big(\cT(A_n)-\log |\BA_n^{\rho_n}|+\delta\log n+
(\underline{\dim}_B(A)-\delta)\log \log n
\nonumber\\
& & \ \ \ \ -\log((\log \log n)^{\underline{\dim}_B(A)-\delta})+\log \left(
\left(\frac{\log \log n}{n\log n}\right)^{\underline{\dim}_B(A)-\delta}
|\BA_n^{\rho_n}|\right) \leq  z\Bigg)\nonumber\\
& & \leq \BP\left(\cT(A_n)-\log |\BA_n^{\rho_n}|+\delta\log n
\leq z\right) \nonumber \\
& & \leq \BP\left(\cT(\BA_n^{\rho_n})-\log |\BA_n^{\rho_n}|
\leq z-\delta\log n\right). \nonumber
\end{eqnarray*}
where the last inequality follows since 
$\cT(A_n) \geq \cT(\BA_n^{\rho_n})$.
Using Theorem \ref{thm:fluct} and Lemma 
\ref{lem:sizeofA} part $a),$ we then get that 
\begin{eqnarray*}
\lefteqn{\BP(\cT(A_n)-\underline{\alpha}\log n \leq z)
\leq c_3|\BA_n^{\rho_n}|^{-\rho_n/600}+\exp(-e^{-z+\delta\log n}) }\\
& & 
\leq c_3|\BA_n^1|^{-\rho_n/600}+\exp(-e^{-z}n^\delta)
=c_3e^{-\log \log n/600}+\exp(-e^{-z}n^\delta) \to 0.
\end{eqnarray*}
The second statement is proved in the same way so we omit the proof.
\fbox{}\\

\section{Applications} \label{sec:applications}
The purpose of this section is two-fold. Firstly, we will demonstrate 
that any set containing a $d$-dimensional closed box satisfies
\eqref{eqn:dimlim}, and so there are many examples of sets for which 
our main result apply. Presumably, results such as 
Proposition \ref{prop:Arhoex} are well known, 
even though we could not find a reference for this exact statement.
Secondly we will consider examples of sets
where $\dim_B(A)<d$ and see what our main results imply for those sets.

\begin{prop} \label{prop:Arhoex}
Let $A=[0,1]^d.$ Then,
\[
0<\lim_{\rho \to 0} \rho^d |\BA^\rho|<\infty,
\]
and in particular the limit exists.
It follows that any bounded set $A$ such that 
$[x,x+\delta]^d\subset A$ for some $x\in \BR^d$ and $\delta>0,$ 
must satisfy \eqref{eqn:dimliminfsup}.
\end{prop}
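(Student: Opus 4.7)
The plan is to first rescale the problem. Writing $g(s) := |[0,s]^d|^1$ for the maximum size of a $1$-separated configuration in $[0,s]^d$, the scaling identity \eqref{eqn:Arhoscaling} gives $|[0,1]^d|^\rho = g(1/\rho)$, so $\rho^d|[0,1]^d|^\rho = g(s)/s^d$ with $s = 1/\rho$. The proposition is thus equivalent to showing that $\lim_{s\to\infty} g(s)/s^d$ exists in $(0,\infty)$.

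The two-sided boundedness is essentially volume bookkeeping. For the upper bound, the balls $B(x_i,\rho/2)$ with $x_i \in A^\rho$ are pairwise disjoint and sit inside $[-\rho/2,1+\rho/2]^d$, giving $\rho^d|A^\rho| \leq 2^d(1+\rho)^d/\kappa_d$. For the lower bound, Lemma \ref{lem:sizeofA}(a) says $A$ is covered by the $\rho$-balls around $A^\rho$, hence $1 = \lambda_d(A) \leq |A^\rho|\kappa_d\rho^d$. The nontrivial step, and where I expect the main obstacle, is the existence of the limit. My approach is an almost sub/superadditive sandwich: for any integer $t \geq 2$, covering $[0,s]^d$ by $\lceil s/t\rceil^d$ translates of $[0,t]^d$ and restricting any $1$-separated configuration forces $g(s) \leq \lceil s/t\rceil^d g(t)$, hence $\limsup_{s\to\infty} g(s)/s^d \leq g(t)/t^d$; replicating an optimal configuration in the inner cube $[0,t-1]^d$ along the lattice $t\bbZ^d$ (with indices fitting in $[0,s]^d$) keeps the collection $1$-separated, because distinct copies are offset by at least $t$ in some coordinate while the internal coordinates lie in $[0,t-1]$, so $g(s) \geq \lfloor s/t\rfloor^d g(t-1)$ and $\liminf_{s\to\infty} g(s)/s^d \geq g(t-1)/t^d$.

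The resulting sandwich
\[
\frac{g(t-1)}{t^d} \;\leq\; \liminf_{s\to\infty}\frac{g(s)}{s^d} \;\leq\; \limsup_{s\to\infty}\frac{g(s)}{s^d} \;\leq\; \frac{g(t)}{t^d}
\]
holds for every $t \geq 2$, and I would close the remaining gap with a fringe estimate. Any $1$-separated subset of the shell $[0,t]^d \setminus [0,t-1]^d$ has cardinality $O(t^{d-1})$ (the shell has volume $O(t^{d-1})$; apply the same disjoint half-ball argument), so $g(t) - g(t-1) = O(t^{d-1})$ and $g(t)/t^d - g(t-1)/t^d \to 0$ as $t\to\infty$. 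This forces $\liminf = \limsup$ and yields the limit $c\in(0,\infty)$. For the corollary, a bounded set $A$ with $[x,x+\delta]^d \subset A \subset [-M,M]^d$ is sandwiched via translation invariance and scaling as $g(\delta/\rho) \leq |A^\rho| \leq g(2M/\rho)$; multiplying by $\rho^d$ and passing to the limit shows that $\rho^d|A^\rho|$ is asymptotically trapped in $[c\delta^d,\, c(2M)^d] \subset (0,\infty)$, which together with the forced identification $\dim_B(A) = d$ (immediate from $|A^\rho| \asymp \rho^{-d}$) yields \eqref{eqn:dimliminfsup}.
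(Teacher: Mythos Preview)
Your proof is correct and shares its core with the paper's: both cover $[0,s]^d$ by translates of $[0,t]^d$ to obtain the subadditive bound $g(s)\le \lceil s/t\rceil^d g(t)$, hence $\limsup_{s\to\infty} g(s)/s^d \le g(t)/t^d$. The difference is that the paper stops there. Since the covering bound holds for every \emph{real} $t>0$ (not just integers), one may take $\liminf_{t\to\infty}$ on the right-hand side and read off
\[
\limsup_{s\to\infty}\frac{g(s)}{s^d}\;\le\;\liminf_{t\to\infty}\frac{g(t)}{t^d},
\]
which already forces the limit to exist. Your superadditive replication step and the fringe estimate $g(t)-g(t-1)=O(t^{d-1})$ are correct but redundant; they are only needed because you restricted $t$ to integers, which creates an artificial gap between $\liminf_{t\in\BN}$ and $\liminf_{s\in\BR}$ that you then have to close by hand. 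The two-sided bounds and the corollary for sets containing a small cube are handled the same way in both proofs.
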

\noindent
{\bf Proof.}
Let $c>0$ and note that $A=[0,1]^d$ is contained in the union of
$(\lceil (c\rho)^{-1}\rceil)^{d}$ translates of the set 
$A_{c\rho}=[0,c\rho]^d$. It follows from \eqref{eqn:scaleinv} that  
$|\BA_{c\rho}^\rho|=|\BA^{\rho/(c\rho)}|=|\BA^{1/c}|$.
Then, by using Lemma \ref{lem:sizeofA} part $b)$, we get that 
\begin{equation}\label{eq:limfinite}
|\BA^\rho| \leq (\lceil (c\rho)^{-1}\rceil)^{d}|\BA_{c\rho}^\rho|
\leq ((c\rho)^{-1}+1)^{d}|\BA^{1/c}|.
\end{equation}
Therefore,
\[
\limsup_{\rho \to 0}\rho^d |\BA^\rho|
\leq \limsup_{\rho \to 0}(c^{-1}+\rho)^{d}|\BA^{1/c}|
=c^{-d}|\BA^{1/c}|,
\]
and so 
\[
\limsup_{\rho \to 0}\rho^d |\BA^\rho|
\leq \liminf_{c \to \infty}c^{-d}|\BA^{1/c}|
=\liminf_{\rho \to 0}\rho^d |\BA^\rho|.
\]
This proves that the limit exists. 

We then observe that \eqref{eq:limfinite} 
immediately implies that the limit is finite. Furthermore, we must 
have that 
\[
|\BA^\rho|\geq |A\cap (\rho \BZ^d)|\geq (\rho^{-1}-1)^d,
\]
and so $\lim_{\rho \to 0}\rho^d|\BA^\rho|\geq 1.$

We now turn to the second statement. To that end, $A\subset \BR^d$ is now 
any bounded set that includes some box $B=[x,x+\delta]^d.$ 
Thus,
\[
\liminf_{\rho \to 0} \rho^d |\BA^\rho|
\geq \liminf_{\rho \to 0} \rho^d |\BB^\rho|>0,
\]
by the first statement of the proposition. Similarly, for large enough 
$\Delta$, we let $D=[y,y+\Delta]^d \supset A$ and then we find that 
\[
\limsup_{\rho \to 0} \rho^d |\BA^\rho|
\leq \limsup_{\rho \to 0} \rho^d |\BD^\rho|<\infty.
\]
\fbox{}\\

\medskip
\noindent
\begin{example}
Proposition \ref{prop:Arhoex} lets us apply Theorem \ref{thm:main} to 
the set $[0,1]^d.$ Informally, we then know that $\cT(n[0,1]^d)$ will be 
of order $d(\log n +\log \log n)$. 

A discrete analogue of $n[0,1]^d$ might be taken to be 
$\BD_n=[0,n-1]^d \cap \BZ^d$. 
However, Theorem \ref{thm:maindiscrete} implies that $\cT(\BD_n)$ will
be of order $\log |\BD_n|=d \log n$. Thus, the sequence 
$(\cT(\BD_n))_{n \geq 1}$ behaves differently from the sequence 
$(\cT(n[0,1]^d))_{n \geq 1}$ in that the first is ''missing'' the extra 
factor $d \log \log n.$
We point out that the results of \cite{Belius} are similar to the 
ones for $(\BD_n)_{n \geq 1},$ while the results of \cite{Jan2} are 
similar to when the ones for $(n[0,1]^d)_{n \geq 1}.$ This seems to indicate
that the extra factor $d \log \log n$ arises from the covered sets 
being non-discrete rather then from the fact that we are using unbounded
cylinders to perform our covering.

Of course, it is important to point out that the sequence 
$(\BD_n)_{n \geq 1}$ is not obtained by starting with some $B$ and then 
multiplying it by $n.$  
\end{example}

\medskip
\noindent
\begin{example}
In this example we consider a two-dimensional Cantor set. Since this 
is a well known
set we shall be somewhat informal in its description (see \cite{Falconer} 
Example 4.3 for details when $d=1$). We also remark that one can easily 
generalize this set into higher dimensions.
We start with the unit box $F_0=[0,1]^2,$ and in our first step 
we delete everything except the corner cubes of side length $1/3.$
Thus, we let 
$F_1=[0,1/3]^2\cup [0,1/3]\times[2/3,1]\cup[2/3,1]\times[0,1/3]\cup[2/3,1]^2.$
We then continue by repeating the exact same procedure on a smaller scale 
within each of the four retained sub-boxes in order to obtain $F_2.$
Continuing, we get a sequence $F_k \supset F_{k+1}\supset \cdots$ 
where $F_k$ consists of $4^k$ boxes of side length $3^{-k}.$
We then define 
\[
F:=\cap_{k=1}^\infty F_k,
\]
and it is easy to check that $\dim_B(F)=\frac{\log 4}{\log 3}.$ 

It is not hard to see that $\BF^{3^{-k}}$ must consist
of the union of all four corner points of the sub boxes of $\BF_k.$ Indeed, 
each sub box has side length $3^{-k}$ so that the distance between two 
adjacent corner points is exactly $3^{-k}$. Therefore, each sub box 
cannot contain more than these four points. We see that 
\[
\lim_{k \to \infty}3^{-k}|\BF^{3^{-k}}|
=\lim_{k \to \infty}3^{-k}4 3^k=4,
\]
and by interpolating we get that 
\[
1=\liminf_{\rho \to 0} \rho |\BF^\rho|
\leq \limsup_{\rho \to 0} \rho |\BF^\rho|=4.
\]
Therefore, Theorem \ref{thm:main} can be applied to show that the sequence 
\[
\left(\cT(nF)-\frac{\log 4}{\log 3}\left(\log n+\log \log n \right)\right)_{n \geq 1}
\]
is tight. Informally, this means that the cover time $\cT(nF)$ will be of
order $\frac{\log 4}{\log 3}\left(\log n+\log \log n \right)$ with some 
fluctuations.
\end{example}

\noindent
\begin{example}
Here we let $A=[0,1]^k\times\{0\}^{d-k} \subset \BR^d$. As in Proposition 
\ref{prop:Arhoex} it is easy to verify that Theorem \ref{thm:main} is 
applicable. Then, we conclude that for any dimensions $d,$ the sequence
\[
\left(\cT(nA)-k\left(\log n+\log \log n \right)\right)_{n \geq 1}
\]
is tight. However, the constants in our results are allowed to depend on 
$d$, so it is possible that a stronger or at least different result can be 
obtained by letting $d \to \infty$ at the same time as $n \to \infty$. 
However, we choose not to pursue this here. 
\end{example}

\noindent
\begin{example}
Our last example will be of a sequence of finite sets. To that end,
consider $\BI_n$ consisting of $\frac{n \log n}{\log \log n}$ equidistant
points on the interval $[0,n].$ Then, let 
$\BB_n=\BI_n^d=\BI_n\times \cdots \times \BI_n$. Clearly, 
\[
\Sep(\BB_n)\log |\BB_n|
=\frac{n}{\frac{n \log n}{\log \log n}} 
\log\left(\frac{n \log n}{\log \log n}\right)^d
=d \log \log n+O((\log n)^{-1/2})\to \infty,
\]
so that Theorem \ref{thm:maindiscrete} tells us that 
$\cT(\BB_n)-\log |\BB_n|=\cT(\BB_n)-d(\log n+\log \log n-\log \log \log n)$
converges to a Gumbel distributed random variable. 

Note that if $\BI_n$ would instead consist of $n \log n$ points, then 
Theorem \ref{thm:maindiscrete} would not be applicable. However, 
estimates can be obtained by using Theorem \ref{thm:fluct} on approximations
of the set. 
\end{example}

\noindent
{\bf Acknowledgement.} The authors would like to thank S. Janson for many useful 
comments and suggestions.

\end{document}